\newcommand{\pde}[2]{ \frac{\partial #1}{\partial #2} }
\newcommand{\ppde}[2]{\frac{\partial^2 #1}{\partial #2^2}}
\newcommand{\R}{\mathbb{R}}
\newcommand{\N}{\mathbb{N}}
\newcommand{\1}{\mathbb{1}}
\newcommand{\diag}[1]{\mathrm{diag}\left(#1\right)}
\newcommand{\sign}[1]{\mathrm{sign}\left(#1\right)}
\newcommand{\card}[1]{\mathrm{card}(#1)}
\newcommand{\abs}[1]{ \left\lvert #1 \right\rvert }
\newcommand{\norm}[1]{ \| #1 \|}
\newcommand{\spectrum}[1]{\mathrm{sp}(#1)}
\newcommand{\G}{\mathcal{G}}
\newcommand{\edgeSet}{\mathcal{E}}
\newcommand{\nodeSet}{\mathcal{V}}
\newcommand{\Gh}{\G^{(h)}}
\newcommand{\edgeSeth}{\edgeSet^{(h)}}
\newcommand{\vspan}[1]{\mathrm{span}(#1)}
\newtheorem{theorem}{Theorem}
\newtheorem{lemma}{Lemma}
\newtheorem{assumption}{Assumption}
\newtheorem{remark}{Remark}
\newcommand{\A}{{A^{(2)}}}
\newcommand{\Bi}[1]{{A^{(3)}_{#1}}}
\newcommand{\Bd}{{A^{(3)}}}
\newcommand{\xeq}{x^\ast}
\title{\LARGE\bf Collective decision-making dynamics in hypernetworks}
\author{
Angela Fontan$^{1 \ast}$ and Silun Zhang$^2$
\thanks{
$^1$A. Fontan is with the Division of Decision and Control Systems, KTH Royal Institute of Technology. 
E-mail: angfon@kth.se. $^2$S. Zhang is with the Department of Mathematics, KTH Royal Institute of Technology. 
E-mail: silunz@kth.se. 
The authors are also affiliated with Digital Futures and WASP. $^\ast$Corresponding author.
}
\thanks{This work was supported by the Knut and Alice Wallenberg Foundation Wallenberg AI, Autonomous Systems and Software Program (WASP), and by the Digital Futures Research Pairs Project.}
}
\begin{document}

\maketitle
\begin{abstract}
This work describes a collective decision-making dynamical process in a multiagent system under the assumption of cooperative higher-order interactions within the community, modeled as a hypernetwork.
The nonlinear interconnected system is characterized by saturated nonlinearities that describe how agents transmit their opinion state to their neighbors in the hypernetwork, and by a bifurcation parameter representing the community's social effort.
We show that the presence of higher-order interactions leads to the unfolding of a pitchfork bifurcation, introducing an interval for the social effort parameter in which the system exhibits bistability.
With equilibrium points representing collective decisions, this implies that, depending on the initial conditions, the community will either remain in a deadlock state (with the origin as the equilibrium point) or reach a nontrivial decision.
A numerical example is given to illustrate the results.
\end{abstract}

\section{Introduction}
In recent years, there has been a growing interest in understanding the collective behavior of networked systems with higher-order interactions (see, e.g., \cite{Battiston2020networks-beyond} for a review). The motivation stems from practical needs in real-world applications where the phenomena observed in complex systems cannot be adequately captured by considering only pairwise interactions between agents. Instead, capturing such phenomena requires incorporating higher-order interactions, which are often represented by using hypergraphs or simplicial complexes. Such a need is evident in numerous applications \cite{Gambuzza2021stability}, ranging from neuron dynamics \cite{Battiston2020networks-beyond} to protein interaction networks \cite{Estrada2018centralities}, and from ecological systems \cite{Mickalide2019higher} to social systems \cite{Battiston2020networks-beyond,Sahasrabuddhe2021nonlinear,Sarker2023homophily,Hickok2022boundedconfidence,Neuhauser2022Consensus}. Most research in this area has focused on the emergence of consensus \cite{Neuhauser2022Consensus,Neuhauser2020multibody} and synchronization \cite{Battiston2020networks-beyond,Gambuzza2021stability,Gallo2022synchronization}, where interconnected agents converge over time to a common equilibrium point or solution, respectively.

In this work, we want to understand how higher-order interactions influence collective behavior in opinion formation under social influence, using hypernetwork dynamics characterized by nonlinear (sigmoidal) functions. 
Extensive research on opinion dynamics in social networks has examined various sociological phenomena, including consensus formation, polarization, persistent disagreement, and clustering, see \cite{Proskurnikov2017Tutorial,Proskurnikov2018Tutorial,Shi2019Dynamics} for reviews.
There have been preliminary efforts to extend well-known opinion dynamics models, like the French-DeGroot model for consensus \cite{DeGroot1974Reaching}, to incorporate higher-order interactions (see, e.g., \cite{Neuhauser2020multibody,Hickok2022boundedconfidence,Zhang2023higher}). In addition, it has been proven that nonlinearities are necessary to capture higher-order effects, as opinion dynamics with linear interactions can be reduced to a pairwise representation \cite{Neuhauser2020multibody}. However, the investigation of these nonlinear dynamics on hypernetworks remains limited. 
We here focus on a particular class of decision-making systems, where nonlinear mechanisms--such as saturation and sigmoidal functions--are used to model how individuals express and transmit their opinions within a network.
These models were previously introduced for lower-order networks in \cite{Abara2018Spectral,Fontan2018Multiequilibria,GrayAl2018}, and are inspired by Hopfield neural networks \cite{Hopfield1984}.
They are used, for instance, to model opinion formation and collective decision-making in both cooperative \cite{Fontan2018Multiequilibria} and antagonistic \cite{Fontan2021Role,Bizyaeva2021Nonlinear,Bizyaeva2025multitopic} communities, with applications ranging from animal group behavior \cite{GrayAl2018} to political decision-making \cite{Fontan2021Signed}. Moreover, modeling societal-scale networks with such saturated nonlinearities typically demands additional effort, e.g., system identification \cite{zhang2025online} and reduced-order modeling \cite{zhang2020modeling}.
 
In this work, we extend these dynamical models to hypernetworks of order $h$ and investigate their qualitative behavior for $h=2$. These hypernetworks include not only pairwise interactions between agents (referred to as 1-order interactions or simply 1-interactions) but also triangle-like interactions involving three agents (referred to as 2-interactions). Similar models have been analyzed, for instance, in \cite{He2020GlobalImpulses} (and referenced therein), where the authors study the stability of higher-order Hopfield neural networks with state-dependent impulses.
Unlike these works, but in line with \cite{Fontan2018Multiequilibria,Fontan2021Role}, our model is characterized by a Laplacian-like structure at the origin and considers monotone saturating nonlinearities to describe the interaction terms.
A bifurcation parameter $\pi$, representing the social effort of the community or strength of interactions, modulates these interactions. 

In the absence of 2-interactions, the qualitative collective behavior is described by a pitchfork bifurcation that occurs at a threshold value $\pi_1$ of the bifurcation parameter. Below this value, the system remains in a deadlock state, where the origin is the unique equilibrium point. This deadlock is resolved only when the social effort exceeds the critical threshold $\pi_1$, at which the dynamical system admits two nontrivial equilibrium points, representing possible collective decisions for the community.

We show that, when present, 2-interactions introduce asymmetry and lead to an unfolding of the pitchfork bifurcation, which indicates a transition from a saddle-node bifurcation to the pitchfork bifurcation. In particular, as $\pi$ increases, the system transitions from having the origin as the unique equilibrium point to undergoing a saddle-node bifurcation at a critical threshold value $\pi_1^\ast$ of $\pi$, where two equilibria emerge: one unstable and one locally asymptotically stable. At a second critical threshold value $\pi_1$ of $\pi$, the unstable point of the saddle-node bifurcation collides with the origin, and both equilibria change stability. As a result, in the interval $(\pi_1^\ast,\pi_1)$, the nonlinear dynamical system exhibits bistability.

The remainder of the paper is organized as follows. In Section~\ref{sec:notation-prelim}, we introduce the notation and mathematical preliminaries needed for our analysis. Section~\ref{sec:dynamics} presents the nonlinear dynamical model used for describing the decision-making process over hypernetworks, and Section~\ref{sec:main-results} is dedicated to a detailed analysis of the existence and stability of the equilibria in such a dynamical model. Section~\ref{sec:example} provides a numerical example illustrating the bifurcation behaviors in the analysis. Finally, Section~\ref{sec:conclusion} concludes the paper.

\section{Notation and Technical Preliminaries}\label{sec:notation-prelim}
This section presents the notation used in the paper (Section~\ref{sec:notation}), as well as technical preliminaries on matrix theory (Section~\ref{sec:matrix-theory}), graph and hypergraph theory (Section~\ref{sec:nets-theory}).

\subsection{Notation}\label{sec:notation}
$\N$ and $\R$ indicate the set of natural and real numbers, respectively. $\1_n\in \R^n$ is the vector of 1s, $I_n$ is the identity matrix of dimension $n$. For a vector $x$, $\abs{x}$ indicates its element-wise absolute value. 

\subsection{Linear Algebra}\label{sec:matrix-theory}
Given a matrix $A= [a_{ij}] \in \R^{n\times n}$, we use $A\ge 0$ and $A> 0$ to denote that A is elementwise nonnegative and positive, respectively. The spectrum of $A$ is denoted by $\spectrum{A}= \{\lambda_1(A),\dots,\lambda_n(A)\}$, where $\lambda_i(A)$, $i=1,\dots,n$, are the eigenvalues of $A$. 
A matrix $A$ is (diagonally) symmetrizable if $DA$ is symmetric for some diagonal matrix $D$ with positive diagonal entries. If $A$ is symmetric (or symmetrizable), then it has real eigenvalues, which we assume to be arranged in a nondecreasing order, i.e., $\lambda_1(A)\le \lambda_2(A) \le \dots \le \lambda_n(A)$.
A matrix $A$ is irreducible if there does not exist a permutation matrix $P$ s.t. $P^TAP$ is block triangular.

\subsection{Graphs and hypernetworks}\label{sec:nets-theory}
This section briefly introduces the definition of a hypergraph and related notions. For details, refer to \cite{Battiston2020networks-beyond,Courtney2016generalized-nets}. 

We begin with the formal definition of graphs. Let $\G = (\nodeSet, \edgeSet)$ be a graph with vertex set $\nodeSet$ (such that
$\card{\nodeSet}= n$) and edge set $\edgeSet \subseteq \nodeSet\times \nodeSet$. A graph is undirected if $(i,j)\in \edgeSet$ implies $(j,i)\in \edgeSet$, and connected if there exists a path, i.e., a sequence of edges, from each node $i\in \nodeSet$ to any other node $j\in \nodeSet$.
The adjacency matrix of $\G$ is $A=[a_{ij}]\in \R^{n\times n}$ where $a_{ij}=0$ if and only if $(j,i)\in \edgeSet$. We consider undirected and connected graphs without self-loops, which means that $A=A^T$ is irreducible and $a_{ii}=0$ for all $i=1,\dots,n$. 
The diagonal matrix $\Delta = \diag{\delta_1,\dots,\delta_n}:=\diag{A\1_n}$ is the degree matrix of $\G$. Since $\G$ is connected and undirected, $\delta_i>0$ for all $i=1,\dots,n$.

The graph $\G$ captures pairwise interactions between nodes, referred to as 1-order interactions or simply 1-interactions. A $0$-interaction represents a self-loop, which is not considered in this paper. 
One way to describe higher-order interactions between nodes, referred to as $h$-interactions for $h\ge 2$, is to use hypergraphs. 

Let $\Gh = (\nodeSet, \edgeSeth)$ be a hypergraph with vertex set $\nodeSet$ (such that
$\card{\nodeSet}= n$) and hyperedge set $\edgeSeth$. Each hyperedge can be expressed as a $k$-interaction $(j_1,j_2,\dots,j_{k+1})$ ($k\in \N, \,k\le h$), between the nodes $j_1,j_2,\dots,j_{k+1} \in \nodeSet$. Note that a graph $\G$ with only 1-interactions can be denoted as $\G^{(1)}$. 
The following representation of the adjacency matrix for a hypergraph $\Gh$ generalizes the notion introduced above for the graph $\G$, following the notation adopted in \cite[p.~10]{Battiston2020networks-beyond}. For each $d = 2,\dots,h+1$, define the $\underbrace{n\times n \cdots \times n}_{d}$ adjacency tensor $A^{(d)}$ such that $a^{(d)}_{j_1 \dots \,j_{d}} \ne 0 $ if and only if the $d$ nodes $j_1, \dots \,j_{d}$ are participating in a $(d-1)$-interaction. 

A more compact representation of the adjacency tensor $A^{(d)}$ capturing $(d-1)$-interactions is: 
$a^{(d)}_{\alpha_d}\ne 0$ iff $\alpha_d\in \edgeSeth$. 
The degree of each node $i\in \nodeSet$ (adapted from the notions of generalized degrees introduced in \cite{Courtney2016generalized-nets,Battiston2020networks-beyond}) is
\begin{equation}
    \delta_i:= \sum_{d=2}^{h+1}\sum_{\alpha_d \in \edgeSeth:\, \{i\}\subseteq \alpha_d } a^{(d)}_{\alpha}.
\label{eqn:degree-i}
\end{equation}
Intuitively, \eqref{eqn:degree-i} indicates the (weighted) number of $(d-1)$-interactions ($\alpha$) that are adjacent to $i$, $d=2,\dots,h+1$.
When $h=1$, each $\alpha\in \mathcal V^2$, and \eqref{eqn:degree-i} reduces to $\delta_i = \sum_{j} a_{ij}$, which is equivalent to the definition of the node degree of a simple graph $\G$. 
Particularly, when $h=2$, 
$\delta_i = \sum_{j} a^{(2)}_{ij} + \sum_{j,k} a^{(3)}_{ijk}$, which aligns with similar formulations found in \cite{Courtney2016generalized-nets, Tudisco2021centrality}.

\section{Decision-making in hypernetworked nonlinear systems}\label{sec:dynamics}
To model the decision-making process in a community of $n$ agents represented by a hypernetwork $\Gh$, we consider the following class of nonlinear high-order interconnected systems
\begin{multline}
\dot x_i = - \delta_i x_i + \pi \sum_{d=2}^{h+1}\;\sum_{j_1,\dots,j_{d-1}=1}^n \!\!\!\!\!a^{(d)}_{i j_1\dots j_{d-1}} \prod_{k=1}^{d-1}\psi_{j_k}(x_{j_k}), \\\quad i=1,\dots n,
\label{eqn:model_high-order}
\end{multline}
Here, $x_i$ represents the opinion of agent $i$, the weights $a^{(d)}_{i j_1\dots j_{d-1}}$ are the elements of the adjacency tensors of the hypernetwork $\Gh$, capturing the $(d-1)$-interactions among agents. The term $\delta_i$ represents the (node) degrees of $\Gh$ and reflects the inertia or resistance of agent $i\in \nodeSet$ to changes in opinion. The parameter $\pi$ is a positive scalar representing the social effort or strength of commitment of the community (see e.g. \cite{Fontan2018Multiequilibria}), $\psi_1,\dots,\psi_n$ are nonlinear functions describing how the agents transmit their opinions to their hyperedges neighbors in $\Gh$. 
The parameter $\pi$ assumes the role of bifurcation parameter in the analysis presented in Section~\ref{sec:main-results}.

\begin{remark}
The model~\eqref{eqn:model_high-order} can be viewed as a generalization of nonlinear network dynamics (corresponding to $h=1$), such as those studied in honeybee-inspired cooperative networks~\cite{GrayAl2018,Fontan2018Multiequilibria} and signed networks~\cite{Fontan2021Role}, to include higher-order interactions captured by hypergraphs. For $h=1$, see, e.g., \cite{GrayAl2018,Fontan2018Multiequilibria}, model~\eqref{eqn:model_high-order} reduces to 
\begin{equation}
\dot x_i = - \delta_i x_i + \pi \;\sum_{j=1}^n a_{i j} \psi_{j}(x_{j}),\quad i=1,\dots n.
\label{eqn:model_h1}
\end{equation}
\end{remark}

In this paper, we focus on the case of $h=2$, in which the hypernetworked nonlinear system~\eqref{eqn:model_high-order} becomes:
\begin{multline}
\dot x_i = - \delta_i x_i + \pi 
\sum_{j=1}^n a^{(2)}_{i j} \psi_{j}(x_{j})
+\pi \sum_{j,k=1}^n a^{(3)}_{i j k} \psi_{j}(x_{j})\psi_{k}(x_{k}),\\\quad i=1,\dots n.
\label{eqn:model_i}
\end{multline}
For each agent $i\in \nodeSet$, let $\Bi{i}:=\Bd(i,\,:,\,:)=[a^{(3)}_{i j k}]_{j,k=1}^n\in \R^{n\times n}$, and, with some abuse of notation, rewrite $\Bd$ as $\Bd = \diag{\Bi{1},\dots,\Bi{n}}$. Then, the individual dynamics~\eqref{eqn:model_i} becomes
\begin{align*}
\dot x_i &= - \delta_i x_i + \pi \sum_{j=1}^n \left( a^{(2)}_{i j} \psi_{j}(x_{j})+\psi(x)^T\Bi{i} \psi(x)\right),
\end{align*}
which can be further rewritten in a compact form:
\begin{align}
\dot x 
&= - \Delta x + \pi \Big(\A \psi(x) + (I_n \otimes \psi(x)^T) \Bd (\1_n \otimes \psi(x)) \Big)
\label{eqn:model}
\end{align}
where $x=[x_1\,\cdots\,x_n]^T\in \R^{n}$ is the vector of agents' opinions\footnote{For simplicity, we consider scalar-valued agent opinions; the extension to higher-dimensional agent states can be handled analogously.}, the matrices $\Delta$, $\A$, and $\Bd$ are the degree matrix and the matrices corresponding to the adjacency tensors of $\Gh$, respectively, and $\psi(x)=[\psi(x_1)\,\cdots\,\psi(x_n)]^T\in \R^{n}$. Similarly to the case $h=1$ in \cite{GrayAl2018,Fontan2018Multiequilibria}, we assume that $\A$ is symmetric, irreducible, and with null diagonal.

Next, we introduce the main assumptions on the nonlinearity and the hypergraph structure in model \eqref{eqn:model}. We begin with an assumption that captures a class of nonlinearities commonly encountered in opinion dynamics, i.e., 
``S-shaped'' functions, such as the hyperbolic
tangent and sigmoid (see, e.g., \cite{GrayAl2018,Fontan2018Multiequilibria,zhang2025online}).
\begin{assumption}[Nonlinearity]\label{assumption:psi}
Each nonlinear function $\psi_i(x_i):\R\to \R$ of the vector $\psi(x)$ satisfies the following conditions:
\begin{gather}
\psi_i(x_i)=-\psi_i(-x_i),\,\forall x_i\in \R\;\;\text{(odd)}
\tag{A.1}
\label{assumption:1psiOdd}
\\
\pde{\psi_i}{x_i}(x_i)>0\;\forall x_i\in \R\;\text{and }\pde{\psi_i}{x_i}(0)=1\;\;\text{(monotone)}
\tag{A.2}
\label{assumption:2psiMonotone}
\\
\lim_{x_i\to\pm \infty} \psi_i(x_i)=\pm 1\;\;\text{(saturated)}
\tag{A.3}
\label{assumption:3psiSaturated}
\\
\psi_i(x_i) \; 
\begin{cases}
\text{strictly convex} & \forall\, x_i<0\\
\text{strictly concave}& \forall\, x_i>0
\end{cases} \;\;\text{(sigmoidal)}
\tag{A.4}
\label{assumption:4Sigmoidal}
\\
\psi_i(\varepsilon)=\psi_j(\varepsilon)=:\psi_u(\varepsilon),\;\forall i,j\in\{1,\dots,n\},\; \varepsilon\in \R\notag\\\qquad\qquad\qquad \text{(identical nonlinearities)}.
\tag{A.5}
\label{assumption:5psiIdentical}
\end{gather}
\end{assumption}
We now introduce the main assumption concerning the structure of the hypergraph.
\begin{assumption}[Hypergraph]\label{assumption:Ai}
Each matrix $\Bi{i}$, $i=1,\dots,n$, satisfies the following conditions 
\begin{enumerate}[label=(\roman*)]
    \item Undirected edges: $\Bi{i} = \Bi{i}^T$;
    \item No self-loops: $[\Bi{i}]_{ij} = [\Bi{i}]_{ji} = 0$, $\forall j\in\mathcal V$; 
    
    \item Proportional influence: $\exists\, \alpha$ such that $\sum_{j,k=1}^n [\Bi{i}]_{jk} = \alpha \sum_{j=1}^n a^{(2)}_{ij}$, or, equivalently, $\1_n^T \Bi{i} \1_n = \alpha [\A\1_n]_i$.
\end{enumerate}
\end{assumption}
Assumption~\ref{assumption:Ai}(i)--(ii) state that each $\Bi{i}$ is symmetric and that each 2-interaction can only involve three distinct agents; intuitively, Assumption~\ref{assumption:Ai}(ii) extends the concept of a null diagonal (meaning no self-loops) from 1-interactions to 2-interactions.
In addition, Assumption~\ref{assumption:Ai}(iii) describes a proportional influence scenario, where all agents share the same proportion of engagements between 1-interactions and 2-interactions. This assumption, together with Assumption~\ref{assumption:psi} \eqref{assumption:5psiIdentical}, is essential for analyzing consensus equilibria in Section~\ref{sec:main-results}.

\section{Existence and stability of multiple equilibria}\label{sec:main-results}
Before presenting the main results, we recall from \cite{GrayAl2018,Fontan2018Multiequilibria,Fontan2021Role} the collective behavior of system~\eqref{eqn:model} when $h=1$. If $\pi \in (0,1)$, the origin is the unique equilibrium point of system~\eqref{eqn:model_h1} and is globally asymptotically stable. When $\pi=\pi_1=1$, the system~\eqref{eqn:model_h1} undergoes a pitchfork bifurcation: The origin becomes unstable, and two new alternative consensus equilibria (i.e., $x^\ast \in \vspan{\1_n}$) emerge, both of which are locally asymptotically stable for any $\pi\in (\pi_1,\pi_2)$, where $\pi_2=\lambda_{n-1}(\Delta^{-1}A)$, where $A=[a_{ij}]$ in \eqref{eqn:model_h1}. Finally, when $\pi = \pi_2$, the system undergoes a second pitchfork bifurcation, and for $\pi > \pi_2$ new equilibria appear.

In this section, we aim to investigate the collective behavior of the system~\eqref{eqn:model} for $h=2$ as $\pi>0$ increases and to examine the conditions under which the system exhibits consensus equilibria.

\subsection{The equilibrium at the origin}
First, observe that the origin is an equilibrium point of the system~\eqref{eqn:model} for all $\pi>0$. Moreover, there exists a threshold value $\pi_1\ge 1$ such that, for any $\pi \in (0,\pi_1)$, it is locally asymptotically stable, which can be shown by using Lyapunov's indirect method (Lemma~\ref{lemma:origin}). 
However, unlike the 1-interactions case of model~\eqref{eqn:model_h1}, the presence of higher-order terms makes the analysis of the global stability of the origin more complex, particularly in the derivation of a value $\pi_1^\ast \in (0,\pi_1)$ such that the origin is globally asymptotically stable for \eqref{eqn:model} for any $\pi<\pi_1^\ast$. While we are only able to derive a conservative bound $\tilde{\pi}_1 \le \pi_1^\ast$ in the general case (Theorem~\ref{thm:origin_GAS_conservative}), under the assumption of identical nonlinear functions $\psi(\cdot)$ and a proportional influence assumption for the adjacency tensor $\Bd$ (see Assumption~\ref{assumption:Ai}(iii)), it is possible to obtain an explicit expression for $\pi_1^\ast$
(Theorem~\ref{thm:origin_GAS}).

We begin by establishing the existence of the trivial equilibrium and analyzing its local stability in the following lemma. Denote 
\begin{equation}
    \pi_1:= \frac{1}{\lambda_n(\Delta^{-1}\A)} \ge 1. 
    \label{eqn:pi1}
\end{equation}

\begin{lemma}\label{lemma:origin}
Suppose that \eqref{assumption:1psiOdd}--\eqref{assumption:4Sigmoidal} in Assumption~\ref{assumption:psi} and Assumption~\ref{assumption:Ai} (i)--(ii) hold for all $i\in \mathcal V$ in system~\eqref{eqn:model}, then the origin is an equilibrium point of system \eqref{eqn:model} for all $\pi>0$. Moreover, the origin is locally asymptotically stable if $\pi \in (0,\pi_1)$, and unstable if $\pi>\pi_1$.
\end{lemma}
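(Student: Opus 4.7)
The plan is to verify the equilibrium property, compute the Jacobian at the origin, and reduce its Hurwitzness to an eigenvalue condition on $\Delta^{-1}\A$.

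By \eqref{assumption:1psiOdd}, each $\psi_i$ is odd, so $\psi(0)=0$. Substituting into \eqref{eqn:model}, every term vanishes at $x=0$: the linear part $-\Delta x+\pi\A\psi(x)$ trivially, and the higher-order part because its $i$-th component $\psi(x)^T\Bi{i}\psi(x)$ is quadratic in $\psi(x)$. Hence the origin is an equilibrium for every $\pi>0$.

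Next I would linearize. By \eqref{assumption:2psiMonotone}, $\psi'(0)=I_n$, so the linear part contributes $-\Delta+\pi\A$ to the Jacobian, while the higher-order part, being quadratic in $\psi(x)$ with $\psi(0)=0$, has vanishing gradient at the origin. Thus $J_0 := -\Delta+\pi\A$, which is symmetric since $\A$ is. Because $\Delta\succ 0$, a congruence by $\Delta^{-1/2}$ gives $J_0\prec 0$ iff $I\succ\pi\,\Delta^{-1/2}\A\,\Delta^{-1/2}$; since $\Delta^{-1/2}\A\,\Delta^{-1/2}$ is similar to $\Delta^{-1}\A$ and therefore shares its spectrum, this is equivalent to $\pi\,\lambda_n(\Delta^{-1}\A)<1$, i.e., $\pi<\pi_1$. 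Lyapunov's indirect method then yields local asymptotic stability for $\pi<\pi_1$; for $\pi>\pi_1$, the same congruence produces a positive eigenvalue of $J_0$, hence instability.

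Finally, for the bound $\pi_1\ge 1$, I would note that Assumption~\ref{assumption:Ai}(ii) together with the nonnegativity of the adjacency tensors, and the degree formula \eqref{eqn:degree-i} specialized to $h=2$, yield $\delta_i\ge \sum_j a^{(2)}_{ij}$ for each $i$. Thus $\Delta-\A$ is symmetric and diagonally dominant with nonnegative diagonal, hence positive semidefinite, and by the same congruence this gives $\lambda_n(\Delta^{-1}\A)\le 1$. The one step that needs care is verifying that the quadratic higher-order term contributes no first-order part to the Jacobian at the origin; once this is established, the remainder is a clean symmetric-eigenvalue computation.
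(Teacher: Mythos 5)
Your proposal is correct and follows essentially the same route as the paper's proof: the origin is an equilibrium by oddness of $\psi$, the Jacobian at the origin is $-\Delta+\pi\A$ because the quadratic higher-order term has vanishing gradient there, and stability reduces to the sign of $\lambda_n(\Delta^{-1}\A)$ via the positive diagonal factor $\Delta$. The only cosmetic difference is that you obtain $\lambda_n(\Delta^{-1}\A)\le 1$ from diagonal dominance of $\Delta-\A$ rather than from Ger\u{s}gorin's theorem as the paper does; both rest on the same inequality $\delta_i\ge\sum_j a^{(2)}_{ij}$, and your congruence argument is, if anything, slightly more explicit than the paper's.
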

\begin{proof}
The first claim follows directly from \eqref{assumption:1psiOdd}.

The Jacobian of the system~\eqref{eqn:model} at the origin is given by $J\!=\!-\Delta + \pi~\A~\pde{\psi}{x}(0)\!=\!-\Delta+\pi \A\!=\!\Delta(-I\!+\pi~\Delta^{-1}\A)$.
Observe that $\delta_i = \sum_{j} a^{(2)}_{ij} + \1_n^T \Bi{i}\1_n$. 
Since $\Delta$ is a diagonal matrix with strictly positive diagonal elements and $J$ is symmetric, the eigenvalues of $J$ are strictly negative for any $\pi \in (0,\pi_1)$, ensuring local asymptotic stability of the origin. However, if $\pi > \pi_1$, the Jacobian $J$ has at least one positive eigenvalue, which means that $x=0$ is unstable. Finally, using the Ger\u{s}gorin's Theorem \cite[Thm 6.1.1]{HornJohnson2013} and recalling that $\Delta\ge 0$ and $\A \ge 0$, we obtain that every eigenvalue $\lambda_i(\Delta^{-1}\A)$ of $\Delta^{-1}\A$, $i=1\dots,n$, satisfies
\begin{equation*}
|\lambda_i(\Delta^{-1}\A)| \le \sum_{j=1}^n \frac{1}{\delta_i} a_{ij}^{(2)} \le 1
\end{equation*}
As $\lambda_n(\Delta^{-1}\A)>0$ then $\lambda_n(\Delta^{-1}\A)\le 1$ and $\pi_1\ge 1$.
\end{proof}

The following result provides a sufficient condition under which the origin is globally asymptotically stable.

\begin{theorem}\label{thm:origin_GAS_conservative}
Suppose that \eqref{assumption:1psiOdd}--\eqref{assumption:4Sigmoidal} in Assumption~\ref{assumption:psi} and Assumption~\ref{assumption:Ai} (i)--(ii) hold for all $i\in \mathcal V$ in system~\eqref{eqn:model}.
The origin is a globally asymptotically stable equilibrium point of system \eqref{eqn:model} for all $\pi\in (0,\tilde{\pi}_1)$, where $\tilde{\pi}_1=\frac{1}{\lambda_n((H+H^T)/2)} \le 1$ and the matrix $H$ is given by 
\begin{equation}
H:= \Delta^{-1} \Bigg(\A + \begin{bmatrix}
\1^T \Bi{1} \\ \vdots \\\1^T \Bi{n}
\end{bmatrix}\Bigg).\label{eqn:H}
\end{equation} 
\end{theorem}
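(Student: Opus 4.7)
The plan is to apply Lyapunov's direct method with the weighted quadratic function $V(x) = \frac{1}{2}x^T\Delta^{-1}x$, which is positive definite and radially unbounded because $\Delta$ has strictly positive diagonal entries. The weight $\Delta^{-1}$ is chosen so that the dissipation term $-\Delta x$ in~\eqref{eqn:model} telescopes with $x^T\Delta^{-1}$ into a clean $-\|x\|^2$. Differentiating $V$ along trajectories gives
\begin{equation*}
\dot V = -\|x\|^2 + \pi\, x^T\Delta^{-1}\A\psi(x) + \pi\sum_{i=1}^n \frac{x_i}{\delta_i}\,\psi(x)^T\Bi{i}\psi(x),
\end{equation*}
reducing the task to dominating the two $\pi$-dependent terms by a quadratic form strictly below $\|x\|^2$ for $\pi$ small.

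From Assumption~\ref{assumption:psi}, each $\psi_i$ is odd, monotone with $\psi_i'(0)=1$, saturated, and sigmoidal, and these properties together yield $\sign{\psi_i(x_i)}=\sign{x_i}$ and $|\psi_i(x_i)|\le\min(|x_i|,1)$ for every $x_i\in\R$. Applying $|\psi_j(x_j)|\le|x_j|$ to the first-order term gives $|x^T\Delta^{-1}\A\psi(x)|\le |x|^T\Delta^{-1}\A|x|$. For the cubic term I would exploit the two saturation-style bounds asymmetrically: use $|\psi_k(x_k)|\le 1$ on one factor of $\psi(x)^T\Bi{i}\psi(x)$ to obtain $|\psi(x)^T\Bi{i}\psi(x)|\le\sum_{j,k}[\Bi{i}]_{jk}|\psi_j(x_j)|=(B|\psi(x)|)_i$, with $B$ precisely the matrix whose $i$-th row is $\1_n^T\Bi{i}$ inside the definition of $H$, and then $|\psi_j(x_j)|\le|x_j|$ on the remaining factor, giving $\big|\sum_i\frac{x_i}{\delta_i}\psi(x)^T\Bi{i}\psi(x)\big|\le |x|^T\Delta^{-1}B|x|$. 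Summing the two estimates and using that a real quadratic form depends only on the symmetric part of its matrix, $|x|^T\Delta^{-1}(\A+B)|x|=|x|^T\frac{H+H^T}{2}|x|$, we reach
\begin{equation*}
\dot V \le -\|x\|^2 + \pi\,|x|^T\frac{H+H^T}{2}|x|.
\end{equation*}
Because $(H+H^T)/2$ is symmetric and entrywise nonnegative, its Rayleigh quotient is maximized on a nonnegative vector by Perron--Frobenius, so the right-hand side is strictly negative for every $x\neq 0$ as soon as $\pi\,\lambda_n((H+H^T)/2)<1$, i.e., $\pi<\tilde{\pi}_1$; radial unboundedness of $V$ then delivers global asymptotic stability.

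To finish, I would verify $\tilde{\pi}_1\le 1$. Observe that $H=\Delta^{-1}(\A+B)\ge 0$ is row-stochastic: $(\A+B)\1_n = \A\1_n + B\1_n = \Delta\1_n$, since $[\Delta\1_n]_i = \delta_i = \sum_j a^{(2)}_{ij}+\1_n^T\Bi{i}\1_n$, so $H\1_n=\1_n$. Perron--Frobenius then gives $\lambda_n(H)=1$, and the standard inequality $\mathrm{Re}(\lambda)\le \lambda_{\max}((M+M^T)/2)$ applied to $M=H$ with $\lambda=\lambda_n(H)$ yields $\lambda_n((H+H^T)/2)\ge 1$, i.e., $\tilde{\pi}_1\le 1$. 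I expect the main obstacle to be the bookkeeping in the cubic term: the asymmetric use of $|\psi_k|\le 1$ on one factor and $|\psi_j|\le|x_j|$ on the other is exactly what produces the matrix $B$ (and hence $H$); any other distribution of the two bounds yields a different and generally incomparable estimate that does not match the stated formula for $\tilde{\pi}_1$.
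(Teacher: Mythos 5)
Your proof is correct, and it reaches the paper's threshold $\tilde{\pi}_1$ by a genuinely different (though closely parallel) route. The paper uses the Persidskii-type Lyapunov function $V(x)=\sum_{i}\delta_i^{-1}\int_0^{x_i}\psi_i(\varepsilon)\,d\varepsilon$, so that $\dot V=\psi(x)^T\Delta^{-1}\dot x$ and the whole estimate lands as a quadratic form in $|\psi(x)|$: the cross term is bounded by $\pi\,|\psi(x)|^TH|\psi(x)|$ (using $|\psi(x)^T\Bi{i}|\le\1_n^T\Bi{i}$ elementwise), while the dissipation is handled via $-\psi(x)^Tx<-|\psi(x)|^T|\psi(x)|$, which relies on $x_i\psi_i(x_i)\ge\psi_i(x_i)^2$. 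You instead take the weighted quadratic $V(x)=\tfrac12 x^T\Delta^{-1}x$ and land everything as a quadratic form in $|x|$, pushing the nonlinearity bounds $|\psi_j(x_j)|\le|x_j|$ and $|\psi_k(x_k)|\le 1$ onto the interaction terms; your asymmetric split of the two bounds across the factors of $\psi(x)^T\Bi{i}\psi(x)$ produces exactly the same matrix $B$ (rows $\1_n^T\Bi{i}$) and hence the same $H$ and the same condition $-I+\pi\,(H+H^T)/2\prec 0$. Both arguments are equally elementary and give identical thresholds; the paper's choice matches the Lyapunov function used in the cited prior work \cite{Fontan2021Role} and keeps the estimate in the ``natural'' coordinates $|\psi(x)|$ for this model class, whereas yours avoids the integral function at the cost of one extra use of $|\psi(x)|\le|x|$. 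Two cosmetic remarks: the appeal to Perron--Frobenius for the Rayleigh quotient is unnecessary (for symmetric $M$ one has $z^TMz\le\lambda_n(M)\|z\|^2$ for every $z$, nonnegative or not), and your Bendixson-type inequality $\mathrm{Re}(\lambda)\le\lambda_n((H+H^T)/2)$ is a cleaner justification of $\tilde{\pi}_1\le 1$ than the paper's bare assertion that $\lambda_n((H+H^T)/2)\ge\lambda_n(H)=1$.
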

\begin{proof}
As in \cite{Fontan2021Role}, let $V : \R^n \to \R_+$ be the Lyapunov function described by 
\begin{equation}
V(x)=\sum_{i=1}^{n}\,\frac{1}{\delta_i}\int_{0}^{x_i} \psi_i(\varepsilon)\,d\varepsilon.
\label{eqn:Lyapunov}
\end{equation}
Since each function $\psi_i(\cdot)$ is monotonically increasing and $\psi_i(\varepsilon)=0$ if and only if $\varepsilon=0$, then $V(x)>0$ for all $x\in  \R^n\setminus \{0\}$ and $V(0)=0$. Moreover, $V(x)$ is radially unbounded.

Computing the derivative of $V$ along the trajectories gives
\begin{align*}
&\dot V(x) = \psi(x)^T \Delta^{-1}\dot x
\\
& = \psi(x)^T \Bigg(\!\!\!
- x + \pi \Delta^{-1} \Big(\A \psi(x) +\! \begin{bmatrix}
\psi(x)^T \Bi{1} \psi(x) \\ \vdots \\\psi(x)^T \Bi{n}\psi(x)
\end{bmatrix} \Big)\Bigg)
\\
& = -\psi(x)^T x 
+ \pi \psi(x)^T\Delta^{-1} \Bigg(\A +\! \begin{bmatrix}
\psi(x)^T \Bi{1} \\ \vdots \\\psi(x)^T \Bi{n}
\end{bmatrix}\Bigg)\psi(x)
\\
& \le -\psi(x)^T x 
+ \pi |\psi(x)^T|\Delta^{-1} \Bigg(\A +\! \begin{bmatrix}
\1_n^T \Bi{1} \\ \vdots \\\1_n^T \Bi{n}
\end{bmatrix}\Bigg)|\psi(x)|
\\
& = -\psi(x)^T x + \pi |\psi(x)^T| H |\psi(x)|
\\
& < -|\psi(x)^T| |\psi(x)| + \pi |\psi(x)^T| H |\psi(x)|
\\
& = |\psi(x)^T| (-I + \pi H ) |\psi(x)|
\\
& = |\psi(x)^T| \Big(-I + \pi \,\frac{H+H^T}{2} \Big) |\psi(x)|,
\end{align*}
Therefore, if $\pi < 1/\lambda_n((H+H^T)/2)$ then the matrix $-I+\pi \frac{H+H^T}{2}$ is negative definite and $\dot{V}(x)<0$, which proves that the origin is globally asymptotically stable. 

Finally, observe that $H$ is nonnegative and irreducible, hence the Perron-Frobenius Theorem \cite[Thm 1.4]{HornJohnson2013} holds. Therefore, from the definition of $\Delta$, $H\1_n = \1_n$, which means that $\lambda_n(H)=1$; then, $\lambda_n((H+H^T)/2 \ge \lambda_n(H) = 1$ and $1/\lambda_n((H+H^T)/2)\le 1$.
\end{proof}

Next, we show that by additionally assuming identical nonlinearities and proportional influence in system~\eqref{eqn:model}, the condition on $\pi$ required for the origin to be globally asymptotically stable can be relaxed. To this end, we define the nonlinear function $g: \R_+ \times \R_+ \to \R$ given by
\begin{equation}
    g(\varepsilon,\pi) := -(1+\alpha) \varepsilon + \pi(\psi_u(\varepsilon)+\alpha \psi_u(\varepsilon)^2),
\label{eqn:g}
\end{equation}
where $\psi_u$ is the identical nonlinear function in 
Assumption \ref{assumption:psi} \eqref{assumption:5psiIdentical}, and $\alpha$ is defined in Assumption \ref{assumption:Ai} (iii).

\begin{theorem}\label{thm:origin_GAS}
Suppose system~\eqref{eqn:model} satisfies all conditions in  Assumption \ref{assumption:psi} and Assumption \ref{assumption:Ai}, then the origin is a globally asymptotically stable equilibrium point of system~\eqref{eqn:model} for any $\pi<\pi_1^\ast<\pi_1$, where $\pi_1$ is given by \eqref{eqn:pi1} and $\pi_1^\ast$ solves
\begin{equation}
    g(\varepsilon,\pi^\ast_1)=0,\quad \pde{g}{\varepsilon}(\varepsilon,\pi_1^\ast)=0,\quad
    \varepsilon > 0.
\label{eqn:pi1_ast_exact}
\end{equation}
\end{theorem}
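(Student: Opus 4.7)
My approach is to leverage the identical-nonlinearity and proportional-influence hypotheses to reduce the multivariate stability question to a one-dimensional comparison governed by the scalar function $g$ in \eqref{eqn:g}. Instead of the smooth Lyapunov function used in Theorem~\ref{thm:origin_GAS_conservative}, I would take the max-norm $V(x) = \max_i |x_i|$, which is continuous, positive definite, and radially unbounded, and control its upper Dini derivative $D^+V$ along trajectories of~\eqref{eqn:model}.

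The key computation localizes to any index $i^\ast$ attaining the maximum. Writing $d_i := \sum_j a^{(2)}_{ij}$, Assumption~\ref{assumption:Ai}(iii) combined with $\delta_i = d_i + \1_n^T \Bi{i}\1_n$ gives $\delta_i = (1+\alpha)\, d_i$. At a time where $|x_{i^\ast}| = V$, assume without loss of generality $x_{i^\ast}>0$ (the opposite sign is symmetric by oddness of $\psi_u$). Since $\psi_u$ is odd and monotone, $|\psi_u(x_j)| = \psi_u(|x_j|) \le \psi_u(V)$ for every $j$, and the nonnegativity of the adjacency entries then gives
\begin{align*}
\sum_j a^{(2)}_{i^\ast j} \psi_u(x_j) &\le d_{i^\ast}\,\psi_u(V), \\
\sum_{j,k} a^{(3)}_{i^\ast j k} \psi_u(x_j) \psi_u(x_k) &\le \alpha\, d_{i^\ast}\,\psi_u(V)^2,
\end{align*}
where the second line uses proportional influence. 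Substituting into \eqref{eqn:model_i}, everything collapses to $\dot x_{i^\ast} \le d_{i^\ast}\, g(V,\pi)$, whence $D^+V \le d_{\min}\, g(V,\pi)$ with $d_{\min} := \min_i d_i > 0$ (using $g(V,\pi) \le 0$).

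To close the argument I would identify $\pi_1^\ast$ with the supremum of $\pi>0$ for which $g(\cdot,\pi)$ remains strictly negative on $(0,\infty)$. Since $g(\varepsilon,\pi)$ is strictly increasing in $\pi$ for each fixed $\varepsilon>0$, is negative at $\pi=0$, and has slope $\partial_\varepsilon g(0,\pi) = \pi - (1+\alpha)$ at the origin, the first violation of negativity as $\pi$ grows from $0$ must occur through a tangent contact of $g(\cdot,\pi)$ with the $\varepsilon$-axis at some $\varepsilon^\ast>0$, which is precisely \eqref{eqn:pi1_ast_exact}. For any $\pi<\pi_1^\ast$, $g(V,\pi)<0$ whenever $V>0$, so $D^+V<0$, and the standard comparison lemma against $\dot w = d_{\min}\, g(w,\pi)$ (whose origin is globally attracting) yields $V(x(t))\to 0$, i.e., $x(t)\to 0$, for every initial condition. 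The strict separation $\pi_1^\ast<\pi_1$ then follows by computing $\pi_1$ under Assumption~\ref{assumption:Ai}(iii): $\Delta^{-1}\A\1_n = (1+\alpha)^{-1}\1_n$ is the Perron eigenpair, so $\pi_1 = 1+\alpha$, and expanding $\psi_u(\varepsilon) = \varepsilon + O(\varepsilon^3)$ yields $g(\varepsilon,1+\alpha) = (1+\alpha)\alpha\varepsilon^2 + O(\varepsilon^3)>0$ for small $\varepsilon>0$, placing $\pi_1$ strictly past $\pi_1^\ast$.

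The main obstacle I anticipate is the geometric characterization of $\pi_1^\ast$: one must rule out that $g(\cdot,\pi)$ first loses its negativity through a transverse rather than a tangent crossing. This is exactly where Assumption~\ref{assumption:psi}\eqref{assumption:4Sigmoidal} (the sigmoidal shape of $\psi_u$) enters decisively, since it forces the concavity of $g(\cdot,\pi)$ on $(0,\infty)$ needed for the double-root characterization in~\eqref{eqn:pi1_ast_exact}.
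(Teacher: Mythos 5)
Your proposal is correct and follows essentially the same route as the paper: the max-norm Lyapunov function $V(x)=\norm{x}_\infty$, an upper Dini derivative bound localized at the maximizing index, the reduction via Assumption~\ref{assumption:Ai}(iii) and identical nonlinearities to the scalar inequality $d^+V \le \big(\sum_j a^{(2)}_{i j}\big)\, g(V,\pi)$, and the characterization of $\pi_1^\ast$ as the tangency (double-root) value of $g(\cdot,\pi)$. Your explicit verification that $\pi_1 = 1+\alpha$ under proportional influence and that $g(\varepsilon,1+\alpha)>0$ for small $\varepsilon>0$ is a slightly more direct justification of the strict inequality $\pi_1^\ast<\pi_1$ than the paper's, but it is the same argument in substance.
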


\begin{proof}
Let $V(x(t))=\norm{x(t)}_{\infty} = \max_i |x_i(t)|$ and observe that $V(x)\ge 0$ for all $x\in \R^n$ and $V(x)=0$ if and only if $x=0$.
Following \cite{Blanchini1995nonquadratic}, let $Q = [I_n\; -I_n]$, $Q_i$ the $i$th column of $Q$, and $\mathrm{I}(x(t))$ the set of the indexes $i$ such that $V(x(t))=Q_i^T x(t)$. Observe that $Q_i^T x = x_i$ if $x_i>0$ and $Q_i^T x = -x_i$ if $x_i<0$. 
Then, the upper Dini derivative of $V$ along the trajectories \eqref{eqn:model} gives
\begin{align*}
d^+ V&(x)
=  \max_{i \in \mathrm{I}(x)} Q_i^T \dot x
\\
& =  \max_{i \in \mathrm{I}(x)} Q_i^T 
\Big( -\Delta x + \pi \A \psi(x)
\\&\qquad\qquad+ \pi (I_n \otimes \psi(x)^T) \Bd (\1_n \otimes \psi(x)) \Big)
\\
& =  \max_{i \in \mathrm{I}(x)}  
\Big( -\delta_i |x_i| + \pi \,\sign{x_i} \sum_{j=1}^n a^{(2)}_{ij}  \psi_j(x_j)
\\&\qquad\qquad+ \pi \,\sign{x_i} \sum_{j,k=1}^n a^{(3)}_{ijk}  \psi_j(x_j)\psi_k(x_k) \Big)
\end{align*}
Let $i\in \mathrm{I}(x)$. Observe that $\sign{x_i} \psi_j( x_j) \le |\psi_j( x_j)|\le |\psi_i(x_i)|$ for \eqref{assumption:2psiMonotone}. Similarly, 
$\sign{x_i}\psi_j(x_j)\psi_k(x_k) \le |\psi_i(x_i)|^2$. Then, recalling that $\sum_{j,k=1}^n a^{(3)}_{ijk}=\alpha \sum_{j=1}^n a^{(2)}_{ij}$ and $\delta_i = (1+\alpha) \sum_{j=1}^n a^{(2)}_{ij}$, we obtain:
\begin{align*}
d^+ V(x)
&\le  \max_{i \in \mathrm{I}(x)}  
\Big( \sum_{j=1}^n a^{(2)}_{ij} g(|x_i|,\pi) \Big)
\end{align*}
where the function $g$ is defined in \eqref{eqn:g}.

The scalar function $\tilde{g}(\varepsilon,\pi)= \pi \big(\psi(\varepsilon) +\alpha \psi(\varepsilon)^2\big)$ is positive for any $\varepsilon>0$, is equal to $0$ at $\varepsilon=0$, and is saturated. Moreover, the following holds:
\begin{subequations}
\begin{align}
\lim_{\varepsilon\to +\infty} \tilde{g}(\varepsilon,\pi) &= (1+\alpha) \pi
\\
\pde{\tilde g}{\varepsilon}(0,\pi)&= \pi > 0
\\
\ppde{\tilde g}{\varepsilon}(0,\pi)&= 
2\alpha \pi > 0 
\end{align}
\end{subequations}
This implies that, when $\pi = 1+\alpha$, there exists $\varepsilon^\ast$ s.t. $\tilde{g}(\varepsilon^\ast,1+\alpha)=(1+\alpha) \varepsilon^\ast$. In turn, this means that there exist a $\pi^\ast\in (0,1+\alpha)$ s.t. $\tilde{g}(\varepsilon,\pi^\ast)$ is tangent to $(1+\alpha)\varepsilon$.
Therefore, for any value of $\pi$ below $\pi^\ast$, where $\pi^\ast$ solves \eqref{eqn:pi1_ast_exact}, $g(\varepsilon,\pi)<0$ for any $\varepsilon >0$. Finally, this implies that $d^+ V(x) <0$ for any $x \in \R^{n}\setminus\{0\}$, which concludes the proof.
\end{proof}

To conclude this subsection, we note that the thresholds in Lemma~\ref{lemma:origin}, Theorem~\ref{thm:origin_GAS_conservative}, and Theorem~\ref{thm:origin_GAS} satisfy the relation $\tilde{\pi}_1 \leq \pi_1^\ast \leq \pi_1$, because
\begin{align}\label{eqn:all-the-pi1}
    \tilde{\pi}_1 \le 1 \le \pi_1^\ast \le \pi_1,
\end{align}
where $\pi_1^\ast$ solves \eqref{eqn:pi1_ast_exact}, $\pi_1\!=\!\frac{1}{\lambda_n(\Delta^{-1}\A)}$, and  $\tilde{\pi}_1\!=\!\frac{1}{\lambda_n(\frac{H+H^T}{2})}$ with $H = \Delta^{-1}(\A + [\Bi{1}^T\1_n \,\cdots \Bi{n}^T\1_n ]^T)$. Note that $\lambda_n(H)=1$.

\subsection{Existence of nontrivial equilibria}
The previous section primarily focuses on the trivial equilibrium point, the origin. 
In contrast, this section investigates the existence of nontrivial equilibria and presents a conjecture regarding their stability (Lemma~\ref{lemma:nontrivial-eq-stability}).

We derive a necessary condition on the parameter $\pi$ for the existence of nontrivial equilibrium points, namely $\pi\ge \frac{1}{\lambda_n(H)}$, where the matrix $H$ was introduced in \eqref{eqn:H} (Theorem~\ref{thm:nontrivial-eq-necessity}). From \eqref{eqn:all-the-pi1}, note that there is a gap between this threshold and the (conservative) one proposed in Theorem~\ref{thm:origin_GAS_conservative}, i.e., $\tilde{\pi}_1=\frac{1}{\lambda_n((H+H^T)/2)}$.
Then, using singularity and unfolding theory \cite{GolubitskySchaeffer1985}, we show that the presence of higher-order interactions (compared to the case $h=1$) leads, at $\pi=\pi_1$, to an unfolding of the pitchfork bifurcation that was observed for the model~\eqref{eqn:model_h1} (Theorem~\ref{thm:nontrivial-eq-sufficiency}). Since the methods used to derive these results are local, we implicitly assume that the parameters describing the higher-order interactions, $\Bi{i}$ $i=1,\dots,n$, are small.
Finally, under the same simplifying assumptions considered in Theorem~\ref{thm:origin_GAS}, namely, identical nonlinearities $\psi(\cdot)$ and Assumption~\ref{assumption:Ai}(iii) for the 2-interactions adjacency tensor $\Bd$, it is possible to show that the system~\eqref{eqn:model} admits two equilibrium points that are of consensus for any $\pi_1 > \pi_1^\ast$, where $ \pi_1^\ast$ solves \eqref{eqn:pi1_ast_exact} (Lemma~\ref{lemma:nontrivial-eq-consensus}).

To analyze the nontrivial equilibria of system~\eqref{eqn:model}, we begin by establishing a necessary condition for their existence.

\begin{theorem}\label{thm:nontrivial-eq-necessity}
Suppose that \eqref{assumption:1psiOdd}--\eqref{assumption:4Sigmoidal} in Assumption~\ref{assumption:psi} and Assumption~\ref{assumption:Ai} (i)--(ii) hold for all $i\in \mathcal V$ in system~\eqref{eqn:model}.
If $\xeq\in\R^n\setminus\{0\}$ is an equilibrium point of system~\eqref{eqn:model}, then 
\[
\pi\ge \frac{1}{\lambda_n(H)}=1.
\]
\end{theorem}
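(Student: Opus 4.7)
The plan is to derive from the equilibrium condition a componentwise inequality of the form $|\xeq| \le \pi H |\xeq|$ and then apply the Perron--Frobenius theorem to the nonnegative irreducible matrix $H$ defined in \eqref{eqn:H}.

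First I would rewrite the equilibrium condition, as in the proof of Theorem~\ref{thm:origin_GAS_conservative}, row by row:
\[
\delta_i \xeq_i \;=\; \pi \sum_{j=1}^n a^{(2)}_{ij}\, \psi_j(\xeq_j) \;+\; \pi\, \psi(\xeq)^T \Bi{i}\, \psi(\xeq),\quad i\in\nodeSet.
\]
Taking absolute values and exploiting the nonnegativity of $\A$ and of every $\Bi{i}$ yields
\[
\delta_i |\xeq_i| \;\le\; \pi \sum_{j=1}^n a^{(2)}_{ij}\, |\psi_j(\xeq_j)| \;+\; \pi\, |\psi(\xeq)|^T \Bi{i}\, |\psi(\xeq)|.
\]

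Second, I would linearize the bilinear term. Saturation \eqref{assumption:3psiSaturated}, combined with oddness \eqref{assumption:1psiOdd} and monotonicity \eqref{assumption:2psiMonotone}, guarantees that $|\psi_j(\xeq_j)|\le 1$; bounding one factor of the quadratic form by $1$ gives $|\psi(\xeq)|^T \Bi{i}\, |\psi(\xeq)| \le \1_n^T \Bi{i}\, |\psi(\xeq)|$. Stacking the $n$ resulting inequalities and multiplying by $\Delta^{-1}$ (well defined since $\delta_i>0$) produces exactly $|\xeq| \le \pi H\, |\psi(\xeq)|$ with $H$ as in \eqref{eqn:H}. The sigmoidal structure \eqref{assumption:4Sigmoidal} together with $\psi'_i(0)=1$ gives the sub-tangent bound $|\psi_i(\xeq_i)| \le |\xeq_i|$ componentwise; since $H\ge 0$, premultiplying preserves the inequality, yielding
\[
|\xeq| \;\le\; \pi H\, |\xeq|.
\]

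Third, I would invoke the Perron--Frobenius theorem for $H$, which is nonnegative (every ingredient is) and irreducible (since $\A$ is, and adding a nonnegative matrix and left-multiplying by a positive diagonal preserves irreducibility). Let $w\in\R^n$ with $w>0$ denote the associated left Perron eigenvector, so that $w^T H = \lambda_n(H)\, w^T$. Premultiplying the previous inequality by $w^T$ gives $(1-\pi\lambda_n(H))\, w^T |\xeq| \le 0$; since $\xeq\neq 0$ and $w>0$ force $w^T|\xeq|>0$, it follows that $\pi \ge 1/\lambda_n(H)$. The equality $\lambda_n(H)=1$, already recorded after \eqref{eqn:all-the-pi1} via the Perron identity $H\1_n=\1_n$, closes the argument.

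The only nontrivial choice is the way to linearize the bilinear form in the second step: using $|\psi_i(\xeq_i)|\le|\xeq_i|$ on both factors would produce a quadratic inequality in $|\xeq|$ that is not amenable to Perron--Frobenius, whereas bounding one factor by $1$ preserves linearity and exposes precisely the matrix $H$ that already governs the local and global stability analysis of the origin in Lemma~\ref{lemma:origin} and Theorem~\ref{thm:origin_GAS_conservative}.
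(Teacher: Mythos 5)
Your proof is correct and follows essentially the same route as the paper: bound the equilibrium equation componentwise using $|\psi|\le\1_n$ and $|\psi_i(\xeq_i)|\le|\xeq_i|$ to obtain $|\xeq|\le\pi H|\xeq|$, then conclude via a Perron--Frobenius comparison. The only cosmetic difference is that you carry out the comparison explicitly with the left Perron eigenvector, whereas the paper cites \cite[Thm 8.3.2]{HornJohnson2013}; your direct absolute-value treatment of the rows is, if anything, slightly cleaner than the paper's one-sided bound on $f(\xeq)$.
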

\begin{proof}
Let 
\begin{equation*}
f(x):= \Delta^{-1} \Bigg(\A\psi(x)+ \begin{bmatrix} \psi(x)^T \Bi{1}\psi(x)\\ \vdots\\ \psi(x)^T \Bi{n}  \psi(x) 
\end{bmatrix}\Bigg),
\end{equation*}
and note that $\xeq$ is an equilibrium point of \eqref{eqn:model} if and only if $\pi f(\xeq)= \xeq$. Since $|\psi(x)|\le \1_n$ for all $x\in \R^n$, $\A\ge 0$, and $\Bi{i}\ge 0$ for all $i=1,\dots,n$, $f(\xeq)$ can be upper bounded by
\begin{align*}
f(\xeq) 
& \le \Delta^{-1} \Bigg(\A +\! \begin{bmatrix}
\1_n^T \Bi{1} \\ \vdots \\\1_n^T \Bi{n}
\end{bmatrix}\Bigg)|\psi(\xeq)|
 = H |\psi(\xeq)|
\end{align*}
where the matrix $H$ is defined in \eqref{eqn:H}.

Recall that $|\psi(\xeq)|\lneq |\xeq|$, i.e., $|\psi(\xeq)|\le |\xeq|$ and $|\psi_i(\xeq_i)|<|\xeq_i|$ for at least one $i$, where the latter condition follows from $\xeq\ne 0$. 
This, together with $\pi f(\xeq)=  \xeq$, gives
\begin{align*}
|\xeq| &\lneq \pi H |\xeq| ,
\end{align*}
which means \cite[Thm 8.3.2]{HornJohnson2013} that $ \pi \lambda_n(H)\ge 1$, i.e., $ \pi \ge \frac{1}{\lambda_n(H)}$.
\end{proof}

Next, we apply singularity and unfolding theory (see, e.g., \cite{GolubitskySchaeffer1985} for details) to demonstrate that the inclusion of higher-order interactions in system~\eqref{eqn:model_h1} leads to an unfolding of the pitchfork bifurcation observed in the original model.

\begin{theorem}
\label{thm:nontrivial-eq-sufficiency}
Suppose that \eqref{assumption:1psiOdd}--\eqref{assumption:4Sigmoidal} in Assumption~\ref{assumption:psi} and Assumption~\ref{assumption:Ai} (i)--(ii) hold for all $i\in \mathcal V$ in system~\eqref{eqn:model}. Assume also that $\lambda_n(\Delta^{-1}\A)$ is simple.
Given $\pi_1$ as in \eqref{eqn:pi1}, let $g(y,\pi,\Bd)$ be the Lyapunov-Schmidt reduction of \eqref{eqn:model} at 
$(0,\pi_1,0)$. 
Then:
\begin{enumerate}[label=(\roman*)]
    \item The bifurcation problem $g(y,\pi,0)$ has a symmetric pitchfork singularity at $(0,\pi_1)$: When $\pi=\pi_1$, the system~\eqref{eqn:model} undergoes a pitchfork bifurcation, the origin becomes unstable, and two new equilibria appear. These equilibria are locally asymptotically stable;
    \label{thm:nontrivial-eq-sufficiency-i}

    \item For $\Bd\ne 0$, the bifurcation problem $g(y,\pi,\Bd)$ is an $n^3$-parameter unfolding of the symmetric pitchfork.
    \label{thm:nontrivial-eq-sufficiency-ii}
\end{enumerate}
\end{theorem}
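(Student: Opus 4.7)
The plan is to perform an explicit Lyapunov-Schmidt reduction of the right-hand side of \eqref{eqn:model} around $(x,\pi,\Bd)=(0,\pi_1,0)$, and then verify the algebraic recognition conditions for a $\mathbb{Z}_2$-symmetric pitchfork singularity as given in \cite[Ch.~II]{GolubitskySchaeffer1985}. Denote the vector field on the right of \eqref{eqn:model} by $F(x,\pi,\Bd)$. The linearization $J:=D_xF(0,\pi_1,0)=-\Delta+\pi_1\A$ is symmetric, negative semi-definite, and has a simple zero eigenvalue by the assumed simplicity of $\lambda_n(\Delta^{-1}\A)$; let $v$ span its null space with $v^Tv=1$, and note that $v$ has strictly positive entries via the Perron-Frobenius theorem applied to the symmetric nonnegative matrix $\Delta^{-1/2}\A\Delta^{-1/2}$. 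Writing $x=yv+u$ with $u\in \vspan{v}^\perp$, the implicit function theorem applied to the projection of $F=0$ onto $\vspan{v}^\perp$ yields a smooth $u=U(y,\pi,\Bd)$ with $U(0,\pi_1,0)=0$ and $U_y(0,\pi_1,0)=0$. The reduced germ is $g(y,\pi,\Bd):=v^TF(yv+U(y,\pi,\Bd),\pi,\Bd)$.

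For part (i), the key observation is that $F(\,\cdot\,,\pi,0)$ is $\mathbb{Z}_2$-equivariant under $x\mapsto -x$ by oddness of each $\psi_i$, so $g(y,\pi,0)$ is odd in $y$; together with the persistence of the origin ($g(0,\pi,\Bd)\equiv 0$), this automatically forces $g=g_y=g_{yy}=g_\pi=0$ at $(0,\pi_1,0)$ and leaves only the two non-degeneracy checks. Differentiating through the reduction and repeatedly using $v^TJ=0$, $\Delta v=\pi_1\A v$, and $\psi_i''(0)=0$, I obtain
\begin{align*}
g_{y\pi}(0,\pi_1,0) &= v^T\A v = \tfrac{1}{\pi_1}\sum_{i=1}^n \delta_i v_i^2 \;>\; 0,\\
g_{yyy}(0,\pi_1,0) &= \sum_{i=1}^n \delta_i\,\psi_i'''(0)\, v_i^4 \;<\; 0,
\end{align*}
where the strict negativity in the second line uses $\psi_i'''(0)<0$, a consequence of combining oddness~\eqref{assumption:1psiOdd} with strict concavity on $\R_{>0}$~\eqref{assumption:4Sigmoidal}. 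These are precisely the recognition conditions for a symmetric pitchfork at $(0,\pi_1)$, and the sign combination $g_{y\pi}g_{yyy}<0$ identifies it as supercritical; the reduced flow $\dot y\approx g_{y\pi}(\pi-\pi_1)y+\tfrac16 g_{yyy}y^3$ then produces two locally asymptotically stable branches for $\pi$ slightly above $\pi_1$, and strict negative-definiteness of $J$ restricted to $\vspan{v}^\perp$ upgrades this stability to the full system.

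For part (ii), switching on $\Bd\ne 0$ destroys the $x\mapsto -x$ symmetry because each quadratic form $\psi(x)^T\Bi{i}\psi(x)$ is even in $x$. Concretely, using $\psi_i(0)=0$ and $\psi_i'(0)=1$ in the second derivative of the $\Bd$-contribution to $F$ yields
\begin{equation*}
g_{yy}(0,\pi_1,\Bd) = 2\pi_1\sum_{i=1}^n v_i\, v^T\Bi{i}v,
\end{equation*}
a nontrivial linear functional of the $n^3$ entries of $\Bd$. Since $g$ depends smoothly on $\Bd$ and reduces to the pitchfork germ at $\Bd=0$, it is by definition an $n^3$-parameter unfolding of that singularity in the sense of \cite[Ch.~III]{GolubitskySchaeffer1985}; the surjectivity of $\Bd\mapsto g_{yy}(0,\pi_1,\Bd)$ onto the $y^2$ modal direction of the universal (two-parameter) unfolding shows that the perturbation is nondegenerate, so the unfolded diagram exhibits the saddle-node branch detaching from the persistent trivial branch that was announced in the introduction.

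The hardest step will be the clean derivation of $g_{yyy}$, because naively one expects contributions from $D_{xx}F\,(v+U_y)U_{yy}$ and $v^TJU_{yyy}$ in addition to the direct third derivative of $\psi$. These auxiliary terms must vanish: $D_{xx}F(0,\pi_1,0)=0$ because $\psi_i''(0)=0$ kills every second-order entry; $U_{yy}(0,\pi_1,0)=0$ then follows from applying the same identity one order higher; and $v^TJU_{yyy}=0$ because $v\in \mathrm{ker}(J)$. If any of the assumptions \eqref{assumption:1psiOdd}--\eqref{assumption:4Sigmoidal} were weakened, these cancellations would fail, and securing the strict sign of $g_{yyy}$ would require a finer analysis leveraging the Perron-Frobenius positivity of $v$ against the remaining cross terms.
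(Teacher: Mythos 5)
Your proposal follows essentially the same route as the paper: a Lyapunov--Schmidt reduction at $(0,\pi_1,0)$, verification of the pitchfork recognition conditions $g=g_y=g_\pi=g_{yy}=0$, $g_{yyy}<0$, $g_{y\pi}>0$, and the observation that turning on $\Bd$ gives by definition an $n^3$-parameter unfolding whose leading effect is the quadratic coefficient $g_{yy}(0,\pi_1,\Bd)=2\pi_1\sum_i v_i\,v^T\Bi{i}v$ --- your formulas agree with the ones the paper quotes (with $w=\Delta v$, so $\delta_i^{-1}w_i=v_i$), the only difference being that you carry out explicitly the computations the paper outsources to \cite{Fontan2018Multiequilibria,Fontan2021Role}. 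One caveat: the strict inequality $g_{yyy}<0$ needs $\psi_i'''(0)<0$, which does not follow from \eqref{assumption:1psiOdd}--\eqref{assumption:4Sigmoidal} alone (e.g.\ $\psi(x)=x(1+x^4)^{-1/4}$ satisfies them with $\psi'''(0)=0$), but this implicit strictness assumption is inherited from the cited works and is not a flaw specific to your argument.
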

\begin{proof}
The proof uses singularity and unfolding theory \cite{GolubitskySchaeffer1985} and extends \cite{Fontan2018Multiequilibria} to the higher-order interactions case. 

Item \ref{thm:nontrivial-eq-sufficiency-i} follows directly from \cite{Fontan2018Multiequilibria} or \cite[Thm 1]{Fontan2021Role}, where it is proven that the Lyapunov-Schmidt reduction of \eqref{eqn:model} at $(0,\pi_1,0)$ satisfies the following conditions at $(0,\pi_1,0)$:
\begin{equation*}
g = g_y = g_\pi = g_{yy} =0,\quad g_{yyy} < 0,\quad g_{\pi y} > 0.
\end{equation*}
Since these conditions solve the recognition problem for a pitchfork bifurcation, \ref{thm:nontrivial-eq-sufficiency-i} is then proven. 

\vspace{0.1cm}

Item \ref{thm:nontrivial-eq-sufficiency-ii} is the definition of an unfolding. In particular, using the derivations from \cite{Fontan2018Multiequilibria} and \cite[eq. \S I.3.23(d)]{GolubitskySchaeffer1985}, yielding $g_{[\Bi{i}]_{jk}}(0,\,\pi_1,0) = 0$ and 
$ g_{yy[\Bi{i}]_{jk}}(0,\pi_1,0) = 2 \pi_1 \delta_i^{-1} w_i v_j v_k >0$ ($ v>0$ and $w>0$ are the right and left eigenvectors relative to $ \lambda_{n} (\Delta^{-1}\A )$) for all $i,k,j=1,\dots,n$, then the Lyapunov-Schmidt reduction of \eqref{eqn:model} at $(0,\pi_1,0)$ is equivalent to
\begin{equation}
    \dot y = (\pi-\pi_1) y +  \kappa_1 y^3 + \kappa_2 y^2, \quad \kappa_1<0,\kappa_2>0.
\label{eqn:normal-form-unfolding}
\end{equation}
\end{proof}
From Theorem~\ref{thm:nontrivial-eq-sufficiency} and the condition $\kappa_2>0$ in \eqref{eqn:normal-form-unfolding}, it follows that the unfolding of the pitchfork bifurcation  exhibits a upper branch of equilibria.

\begin{lemma}\label{lemma:nontrivial-eq-consensus}
Suppose system~\eqref{eqn:model} satisfies all conditions in  Assumption \ref{assumption:psi} and Assumption \ref{assumption:Ai}.
Let $g: \R_+ \times \R_+ \to \R$ be the nonlinear function defined in \eqref{eqn:g}.
Then, for each $\pi>\pi_1^\ast$, where $\pi_1^\ast$ solves \eqref{eqn:pi1_ast_exact}, the system~\eqref{eqn:model} admits two consensus equilibrium points $\varepsilon_{1} \1_n, \varepsilon_2 \1_n$, where $\varepsilon_1,\varepsilon_2$ are the two solutions of $g(\varepsilon,\pi)=0$.
\end{lemma}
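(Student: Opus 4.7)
The plan is to exploit the consensus ansatz $x = \varepsilon \1_n$ to collapse the $n$-dimensional equilibrium condition of~\eqref{eqn:model} into the scalar equation $g(\varepsilon,\pi)=0$, and then read off the existence of two positive roots from the geometric picture already set up inside the proof of Theorem~\ref{thm:origin_GAS}.

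For the reduction, Assumption~\ref{assumption:psi}\eqref{assumption:5psiIdentical} gives $\psi(\varepsilon \1_n) = \psi_u(\varepsilon)\1_n$, so the $i$-th component of the right-hand side of~\eqref{eqn:model} evaluated at $x = \varepsilon\1_n$ becomes
\[
-\delta_i\varepsilon + \pi\, [\A\1_n]_i\, \psi_u(\varepsilon) + \pi\, (\1_n^T\Bi{i}\1_n)\, \psi_u(\varepsilon)^2.
\]
Assumption~\ref{assumption:Ai}(iii) replaces $\1_n^T\Bi{i}\1_n$ by $\alpha [\A\1_n]_i$, and the identity $\delta_i = [\A\1_n]_i + \1_n^T\Bi{i}\1_n$ gives $\delta_i = (1+\alpha)[\A\1_n]_i$. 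Setting $s_i := [\A\1_n]_i > 0$ (strictly positive because $\G$ is connected), the $i$-th equation factorizes as $s_i\, g(\varepsilon,\pi)=0$. The essential role of proportional influence is to make the scalar equation \emph{identical} for every $i$, so the full vector equilibrium condition collapses to $g(\varepsilon,\pi)=0$.

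The second step is to count positive roots of $g(\cdot,\pi)$. I would reuse the tangency analysis already developed in the proof of Theorem~\ref{thm:origin_GAS}: writing $g(\varepsilon,\pi) = \tilde g(\varepsilon,\pi) - (1+\alpha)\varepsilon$, the function $\tilde g(\cdot,\pi)$ is positive, strictly convex at the origin ($\ppde{\tilde g}{\varepsilon}(0,\pi) = 2\alpha\pi > 0$), and saturates at $(1+\alpha)\pi$, while the linear term $(1+\alpha)\varepsilon$ grows unboundedly. The defining conditions \eqref{eqn:pi1_ast_exact} state that at $\pi=\pi_1^\ast$ these two graphs are tangent at some $\varepsilon^\ast > 0$, producing a double root of $g$. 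Because $\pde{g}{\pi}(\varepsilon^\ast,\pi_1^\ast) = \psi_u(\varepsilon^\ast)+\alpha \psi_u(\varepsilon^\ast)^2 > 0$, a standard saddle-node unfolding argument shows that as $\pi$ crosses $\pi_1^\ast$ from below, the double root splits into two simple positive roots $\varepsilon_1 < \varepsilon^\ast < \varepsilon_2$, so that $\varepsilon_j \1_n$ are the two claimed consensus equilibria of~\eqref{eqn:model}.

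The main technical obstacle is guaranteeing that two positive roots persist for \emph{all} $\pi>\pi_1^\ast$, not merely for $\pi$ slightly above $\pi_1^\ast$. Locally, both branches $\varepsilon_1(\pi),\varepsilon_2(\pi)$ are smooth by the implicit function theorem, but they could in principle collide again, escape to infinity, or merge with the origin. I would rule this out by combining three observations: $g(0,\pi)=0$ always; $g(\varepsilon,\pi)\to -\infty$ as $\varepsilon\to\infty$; and the sigmoidal structure of $\psi_u$ (Assumption~\ref{assumption:psi}\eqref{assumption:4Sigmoidal}) forces $g(\cdot,\pi)$ on $(0,\infty)$ to have a single interior critical point, so the positive-root count is controlled solely by the sign of $\pde{g}{\varepsilon}(0,\pi) = \pi-(1+\alpha)$ and by the sign of $g$ at its maximum. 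Tracking these two quantities as $\pi$ crosses $\pi_1^\ast$ and $\pi_1=1+\alpha$ then delivers exactly the two nontrivial consensus equilibria throughout the parameter range of interest.
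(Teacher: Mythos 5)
Your proposal follows essentially the same route as the paper: the consensus ansatz $x=\varepsilon\1_n$ together with \eqref{assumption:5psiIdentical} and Assumption~\ref{assumption:Ai}(iii) collapses the equilibrium condition to $[\A\1_n]_i\, g(\varepsilon,\pi)=0$ for every $i$, exactly as in the paper's proof, and your root-counting via the tangency of $\tilde g(\cdot,\pi)$ with the line $(1+\alpha)\varepsilon$ is precisely the argument the paper invokes by deferring to the proof of Theorem~\ref{thm:origin_GAS}. Your additional discussion of why the two roots persist for all $\pi>\pi_1^\ast$ (tracking $\pde{g}{\varepsilon}(0,\pi)=\pi-(1+\alpha)$ and the sign of $g$ at its interior maximum) is more explicit than what the paper provides, so the proposal is at least as complete as the published proof.
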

\begin{proof}
Let 
\begin{equation*}
f(x):= \A\psi(x)+ \begin{bmatrix} \psi(x)^T \Bi{1}\psi(x)\\ \vdots\\ \psi(x)^T \Bi{n}  \psi(x) 
\end{bmatrix},
\end{equation*}
and note that $x=\varepsilon \1_n$ is an equilibrium point of \eqref{eqn:model} if and only if $\pi f(\varepsilon \1_n)=\varepsilon 
 \Delta \1_n$. Under \eqref{assumption:5psiIdentical}, $\psi(\varepsilon \1_n) = \psi_u(\varepsilon) \1_n$, and $f(\varepsilon \1_n)$ is given by
\begin{align*}
f(\varepsilon\1) 
&=  \psi_u(\varepsilon)
\A \1_n + \psi_u(\varepsilon)^2 \begin{bmatrix} \1_n^T \Bi{1}\1_n\\ \vdots\\ \1_n^T \Bi{n}  \1_n 
\end{bmatrix}
\\
&=\psi_u(\varepsilon)
\A \1_n + \psi_u(\varepsilon)^2 \alpha \A \1_n 
\\
&=\A \1_n \big(\psi_u(\varepsilon)  + \alpha \psi_u(\varepsilon)^2 \big)
\end{align*}
where the second equality holds due to Assumption~\ref{assumption:Ai}(iii).

Assumption~\ref{assumption:Ai}(iii) also implies that 
$\delta_i = \sum_{j=1}^n a^{(2)}_{ij}  + \1_n \Bi{i} \1_n  
= \sum_{j=1}^n a^{(2)}_{ij} (1+\alpha)$ for all $i=1,\dots,n$, i.e, $\Delta = (1+\alpha)\, \diag{\A\1_n}$.
Therefore, $\varepsilon \Delta \1_n = \varepsilon (1+\alpha)\, \A\1_n$.

It follows that $\pi f(\varepsilon \1_n)=\varepsilon \Delta \1_n$ holds if and only if 
$\pi \big(\psi_u(\varepsilon)  + \alpha \psi_u(\varepsilon)^2 \big) =(1+\alpha) \varepsilon $, or, equivalently, if and only if $g(\varepsilon,\pi)=0$, with $g$ as in \eqref{eqn:g}. The existence of solutions for $g(\varepsilon,\pi)=0$ for $\pi>\pi_1^\ast$ follows the proof of Theorem~\ref{thm:origin_GAS}.
\end{proof}

\subsection{Stability of nontrivial equilibria}
Regarding the local asymptotic stability of the new nontrivial equilibrium points, it can be shown, using similar arguments to \cite[Theorem 2(ii.2)]{Fontan2021Role}, that for any $\pi\in (\pi_1,\pi_2)$ with $\pi_2 = \frac{1}{\lambda_{n-1}(\Delta^{-1}A)}$, every nontrivial equilibrium point is locally asymptotically stable (Lemma~\ref{lemma:nontrivial-eq-stability}). 
Finally, using similar arguments to \cite[Theorem 3(ii)]{Fontan2021Role}, we can prove that all the solutions of \eqref{eqn:model} are bounded (Lemma~\ref{lemma:nontrivial-eq-set-stability}). 
Due to the length of the proofs and their similarity to the ones from \cite{Fontan2021Role}, we choose not to include them in this paper and will instead present them in future work.
\begin{lemma}\label{lemma:nontrivial-eq-stability}
Suppose that \eqref{assumption:1psiOdd}--\eqref{assumption:4Sigmoidal} in Assumption~\ref{assumption:psi} and Assumption~\ref{assumption:Ai} (i)--(ii) hold for all $i\in \mathcal V$ in system~\eqref{eqn:model}.
Let $\xeq\in \R^n\setminus\{0\}$ be an equilibrium point of \eqref{eqn:model} at $\pi\in (\pi_1,\pi_2)$, where $\pi_2 = \frac{1}{\lambda_{n-1}(\Delta^{-1}\A)}$. Then, $\xeq$ is locally asymptotically stable.
\end{lemma}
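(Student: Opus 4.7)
The plan is to prove local asymptotic stability by Lyapunov's indirect method, i.e., by showing that the Jacobian $J(\xeq)$ of \eqref{eqn:model} at every nontrivial equilibrium $\xeq$ is Hurwitz whenever $\pi\in(\pi_1,\pi_2)$. Direct differentiation of \eqref{eqn:model}, together with Assumption~\ref{assumption:Ai}(i)--(ii) and the full index-symmetry of the adjacency tensor of the undirected hypernetwork $\Gh$, yields
\begin{equation*}
J(\xeq) = -\Delta + \pi\bigl(\A + 2 B(\xeq)\bigr) D(\xeq),
\end{equation*}
where $D(\xeq) = \diag{\psi'_1(\xeq_1),\dots,\psi'_n(\xeq_n)}$ has strictly positive diagonal entries in $(0,1]$ by \eqref{assumption:2psiMonotone}, and $B(\xeq)$ is the symmetric nonnegative matrix with entries $[B(\xeq)]_{i\ell}=\sum_k a^{(3)}_{i\ell k}\psi_k(\xeq_k)$.

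The second step is to reduce the Hurwitz property of $J(\xeq)$ to the eigenvalue bound $\pi\lambda_n(M(\xeq))<1$ for the symmetric matrix
\begin{equation*}
M(\xeq) := D(\xeq)^{1/2}\Delta^{-1}\bigl(\A + 2B(\xeq)\bigr) D(\xeq)^{1/2},
\end{equation*}
obtained from $J(\xeq)$ by a congruence that preserves the sign of the spectrum. Following the pairwise argument of \cite[Thm 2(ii.2)]{Fontan2021Role}, the lower $n-1$ eigenvalues of $M(\xeq)$ are controlled through Courant--Fischer interlacing in terms of $\lambda_{n-1}(\Delta^{-1}\A)=1/\pi_2$, using $D(\xeq)\preceq I$ and the smallness of $B(\xeq)$ that is inherited from the localization to small $\Bd$ in Theorem~\ref{thm:nontrivial-eq-sufficiency}; this takes care of stability in the $(n-1)$-dimensional complement of the Perron direction of $\Delta^{-1}\A$ as soon as $\pi<\pi_2$. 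The delicate eigenvalue is the top one, which lives along the very Perron mode whose destabilization at $\pi_1$ produced $\xeq$. Stability along this mode is recovered from the strict concavity of $\psi_i$ on $(0,\infty)$ in \eqref{assumption:4Sigmoidal}: combining the equilibrium identity $\pi f(\xeq)=\xeq$ from the proof of Theorem~\ref{thm:nontrivial-eq-necessity} with the secant--tangent inequality $\psi'_i(\xeq_i)<\psi_i(\xeq_i)/\xeq_i$ (valid for $\xeq_i\ne 0$) produces the strict bound $\pi\lambda_n(M(\xeq))<1$.

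The main obstacle I expect is precisely this concavity-based top-eigenvalue estimate in the hypernetworked setting. For $\Bd=0$ the argument reduces cleanly to a Perron--Frobenius computation on $D(\xeq)^{1/2}\Delta^{-1}\A D(\xeq)^{1/2}$, but for $\Bd\ne 0$ the quadratic correction $2B(\xeq)D(\xeq)$ perturbs simultaneously the effective adjacency matrix and its leading eigenvector, so the secant-vs-tangent comparison has to be carried out along a perturbed direction. I would handle this by exploiting the smallness of $\Bd$ already assumed in Theorem~\ref{thm:nontrivial-eq-sufficiency}, so that the simple leading eigenpair of $M(\xeq)$ depends continuously on $\Bd$, and then invoking the strict inequality from the $h=1$ case together with eigenpair continuity to preserve the bound for all sufficiently small $\Bd$ and all $\pi\in(\pi_1,\pi_2)$. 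Since the structure of the estimate mirrors \cite{Fontan2021Role}, I would—as the authors do—omit the detailed bookkeeping and defer the full computation to an extended version.
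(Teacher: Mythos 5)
The paper never actually proves this lemma: the authors explicitly defer the proof to future work, saying only that it follows ``using similar arguments to [Thm 2(ii.2)]'' of \cite{Fontan2021Role}. Your plan --- Lyapunov's indirect method, the Jacobian $J(\xeq)=-\Delta+\pi\big(\A+2B(\xeq)\big)D(\xeq)$, a symmetrizing congruence, interlacing/Ostrowski for the lower $n-1$ eigenvalues, and a concavity (secant-versus-tangent) estimate for the leading one --- is precisely that route, and your Jacobian computation is correct. Two small repairs: the symmetry of $B(\xeq)$ as a matrix requires $a^{(3)}_{ijk}=a^{(3)}_{jik}$ (symmetry in the \emph{first two} indices, i.e.\ full symmetry of an undirected hyperedge), which is not literally contained in Assumption~\ref{assumption:Ai}(i)--(ii); and the symmetric partner of $J$ is $D(\xeq)^{1/2}\Delta^{-1/2}\big(\A+2B(\xeq)\big)\Delta^{-1/2}D(\xeq)^{1/2}$ rather than your $M(\xeq)$, which is not symmetric (though the two are similar, so your eigenvalue criterion survives).

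Two substantive points. First, you correctly identify the real obstruction: the quadratic term enters the Jacobian with weight $2B(\xeq)$ but the equilibrium identity reads $\Delta\xeq=\pi\big(\A+B(\xeq)\big)\psi(\xeq)$ with only \emph{one} factor of $B$, so the $h=1$ Perron comparison $\pi\Delta^{-1}(\A+2B)D\,\xeq<\xeq$ acquires the extra positive term $\pi\Delta^{-1}B(\xeq)\psi(\xeq)$ and does not close. This is not a bookkeeping nuisance: it is exactly why the small-amplitude branch $\xeq_2$ is \emph{unstable} on $(\pi_1^\ast,\pi_1)$, so any argument that ``reduces to the $h=1$ computation'' must genuinely use $\pi>\pi_1$ somewhere. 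Second, your proposed fix --- continuity of the leading eigenpair in $\Bd$ together with the strict $h=1$ inequality --- degenerates precisely where it is needed most: as $\pi\downarrow\pi_1$ the unperturbed equilibria shrink to the origin and the $h=1$ stability margin $1-\pi\lambda_n$ tends to zero, so no perturbation bound uniform in $\pi$ is available near $\pi_1$. In that regime the stability assignment has to come instead from the exchange of stability encoded in the unfolded normal form \eqref{eqn:normal-form-unfolding} of Theorem~\ref{thm:nontrivial-eq-sufficiency}, and a complete proof must stitch that local singularity-theoretic statement to your perturbation argument on compact subsets of $(\pi_1,\pi_2)$. With that stitching made explicit, your sketch is consistent with the proof strategy the authors indicate but omit.
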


\begin{lemma}\label{lemma:nontrivial-eq-set-stability}
Suppose that \eqref{assumption:1psiOdd}--\eqref{assumption:4Sigmoidal} in Assumption~\ref{assumption:psi} and Assumption~\ref{assumption:Ai} (i)--(ii) hold for all $i\in \mathcal V$ in system~\eqref{eqn:model}.
Then, the trajectories of the system~\eqref{eqn:model} asymptotically converge to the set $\Omega = \{x\in \R^n: \norm{x}_\infty\le \pi\}$.
\end{lemma}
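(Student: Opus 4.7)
The plan is to reuse the nonsmooth Lyapunov machinery already deployed in the proof of Theorem~\ref{thm:origin_GAS}, but now with the candidate $V(x)=\norm{x}_\infty$ viewed directly as a radial measure of the trajectory rather than as a convergence-to-the-origin certificate. Observe that Assumption~\ref{assumption:psi}~\eqref{assumption:1psiOdd}--\eqref{assumption:3psiSaturated} together imply the elementwise bound $|\psi_j(x_j)|\le 1$ for every $x_j\in\R$ (a strictly increasing odd function with limits $\pm 1$ at $\pm\infty$ cannot exceed $1$ in magnitude). This is the key structural estimate I will use to dominate the higher-order terms uniformly.

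First, I would fix $x\ne 0$ and pick $i\in \mathrm{I}(x)$, so that $|x_i|=\norm{x}_\infty$. Following the same computation as in Theorem~\ref{thm:origin_GAS}, the upper Dini derivative is
\begin{align*}
d^+ V(x)
&=\max_{i\in \mathrm{I}(x)} \sign{x_i}\,\dot x_i\\
&=\max_{i\in \mathrm{I}(x)}\!\Big(\!-\delta_i|x_i|+\pi\,\sign{x_i}\!\sum_{j} a^{(2)}_{ij}\psi_j(x_j)\\
&\qquad\qquad+\pi\,\sign{x_i}\!\sum_{j,k} a^{(3)}_{ijk}\psi_j(x_j)\psi_k(x_k)\Big).
\end{align*}
Using $a^{(2)}_{ij},a^{(3)}_{ijk}\ge 0$ and $|\psi_j(x_j)|\le 1$, I would crudely bound $\sign{x_i}\psi_j(x_j)\le 1$ and $\sign{x_i}\psi_j(x_j)\psi_k(x_k)\le 1$, yielding
\begin{equation*}
d^+V(x)\le -\delta_i |x_i|+\pi\Big(\sum_j a^{(2)}_{ij}+\sum_{j,k}a^{(3)}_{ijk}\Big)=\delta_i\bigl(\pi-\norm{x}_\infty\bigr),
\end{equation*}
where the final equality uses the degree formula $\delta_i=\sum_j a^{(2)}_{ij}+\sum_{j,k}a^{(3)}_{ijk}$ from \eqref{eqn:degree-i} specialized to $h=2$.

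Since $\delta_i>0$ for every $i\in\nodeSet$, the inequality above gives $d^+V(x)<0$ whenever $\norm{x}_\infty>\pi$, and one can refine it to a uniform strict decrease on any set $\{\norm{x}_\infty\ge \pi+\eta\}$ for $\eta>0$. I would then conclude by a standard Lyapunov/invariance argument: $V(x(t))$ is nonincreasing along any trajectory that lies outside $\Omega$, trajectories cannot leave $\Omega$ once entered (so $\Omega$ is forward invariant), and the uniform bound $d^+V(x)\le \min_i\delta_i\,(\pi-\norm{x}_\infty)$ shows that $\norm{x(t)}_\infty$ cannot stay bounded away from $\pi$ from above, giving $\limsup_{t\to\infty}\norm{x(t)}_\infty\le \pi$, i.e., convergence to $\Omega$.

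I do not anticipate a serious obstacle here: the saturation of $\psi$ does all the heavy lifting and the proportional influence assumption \ref{assumption:Ai}(iii) is not needed. The only mild subtlety is justifying the passage from the pointwise Dini bound to the asymptotic convergence statement for the set $\Omega$, which is a routine comparison argument (the scalar differential inequality $d^+\norm{x}_\infty\le \delta_{\min}(\pi-\norm{x}_\infty)$ guarantees exponential contraction of $\norm{x}_\infty-\pi$ toward nonpositive values); this is standard once the Dini inequality is established and mirrors the final step in the analogous bounded-trajectories result cited from \cite{Fontan2021Role}.
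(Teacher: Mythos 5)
Your proof is correct and is precisely the argument the paper defers to: it reuses the $\norm{\cdot}_\infty$ upper-Dini-derivative machinery of Theorem~\ref{thm:origin_GAS} (borrowed from \cite{Blanchini1995nonquadratic} and \cite{Fontan2021Role}), with the saturation bound $|\psi_j(x_j)|\le 1$ and the degree identity $\delta_i=\sum_j a^{(2)}_{ij}+\sum_{j,k}a^{(3)}_{ijk}$ yielding $d^+V(x)\le \delta_i(\pi-\norm{x}_\infty)$. The paper omits its own proof, citing similarity to \cite[Theorem 3(ii)]{Fontan2021Role}, so there is nothing further to compare; your write-up fills that gap correctly, including the observation that Assumption~\ref{assumption:Ai}(iii) is not needed.
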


\section{Numerical example}\label{sec:example}
Consider an hypernetwork $\G^{(2)}$ of $n = 5$ nodes and weighted adjacency tensors $\A$ and $\Bi{i}$ $i=1,\dots,n$. 
The matrix $\A$ is nonnegative, symmetric, and irreducible; its elements are drawn (single realization) from a uniform distribution (with $P[a^{(2)}_{ij}\ne 0] = 0.8$ for all $i,j$).
Similarly, each matrix $\Bi{i}$ is nonnegative and symmetric, and its elements are drawn from a uniform distribution with $P[[\Bi{i}]_{jk}\ne 0] = 0.2$ for all $j,k$. Moreover, they are scaled to satisfy Assumption~\ref{assumption:Ai}(iii) with $\alpha =1$. Figure~\ref{fig:EX1_graphs} provides an illustration of all the adjacency tensors.

We obtain $\pi_1 = 1+\alpha = 2$, $\pi_1^\ast \approx 1.44$, and $\tilde{\pi}_1\approxeq 0.9$. Figure~\ref{fig:EX1_bifurcation-diagram} depicts the bifurcation diagram of the system~\eqref{eqn:model} for a component $x_i$, showing the equilibrium points as we vary $\pi=0.005,0.01,\dots,5$. 

We first consider the case where there are no 2-interactions among the agents, i.e., $\Bd=0$, which simplifies the system~\eqref{eqn:model} to \eqref{eqn:model_h1}. The 1-interactions model \eqref{eqn:model_h1} exhibits a symmetric pitchfork bifurcation at $\pi_1$: For any $\pi<\pi_1$, the origin is the unique equilibrium point and is globally asymptotically stable \cite{Fontan2018Multiequilibria}; For any $\pi>\pi_1$ and $\pi$ close to $\pi_1$, system \eqref{eqn:model_h1} admits two alternative equilibrium points $\pm \xeq\in \vspan{\1_n}$, which are locally asymptotically stable (Figure~\ref{fig:EX1_bifurcation-diagram-a}). 

When 2-interactions are taken into account and Assumption~\ref{assumption:Ai}(iii) holds, we obtain the bifurcation diagram of Figure~\ref{fig:EX1_bifurcation-diagram-b}, which illustrates the unfolding of a pitchfork bifurcation. For any $\pi<\pi_1^\ast$, where $\pi_1^\ast$ solves \eqref{eqn:pi1_ast_exact}, the origin is the unique equilibrium point and it is globally asymptotically stable (Theorem~\ref{thm:origin_GAS}). Moreover, we know that it is locally asymptotically stable for any $\pi<\pi_1$ and becomes unstable at $\pi_1$ (Lemma~\ref{lemma:origin}). For any $\pi>\pi_1^\ast$ (and $\pi \not \gg \pi_1$), system \eqref{eqn:model_h1} admits two equilibrium points $\xeq_1, \xeq_2 \in \vspan{\1_n}$ with $\norm{\xeq_2}_2 < \norm{\xeq_1}_2$ (Theorem~\ref{thm:nontrivial-eq-sufficiency} and Lemma~\ref{lemma:nontrivial-eq-consensus}). $\xeq_2$ is unstable for any $\pi <\pi_1$.

This bifurcation diagram shows that for any $\pi\in (\pi_1^\ast,\pi_1)$, the system exhibits bistability: depending on the initial conditions, the trajectories of the system will either converge to the origin or (``jump'' and) converge to the stable nontrivial equilibrium point $\xeq_1$.

\subsection{An interpretation in the context of social networks}
This example is inspired by a previous longitudinal study done at the KTH Live-In Lab \cite{Fontan2023Social}, where the sustainability dynamics of $5$ tenants were observed over a period of $5$ weeks. The study revealed that, while all tenants actively participated with their neighbors in discussions related to sustainability within mobility, only certain subgroups—-such as triangles of tenants—-engaged in discussions on other topics like food, water consumption, and electricity consumption.
Therefore, similar to the co-authorship hypernetworks example \cite{Carletti2020random}, the social hypernetwork of the KTH Live-In Lab tenants captures (multi-)tenant collaborations across various sustainable activities (mobility, food, water consumption, and electricity consumption), which cannot be effectively represented by a traditional pairwise graph. In turn, this interpretation may lead to a deeper understanding of the decision-making process regarding sustainability efforts.

\begin{figure}[!t]\centering
\includegraphics[width=0.47\textwidth]{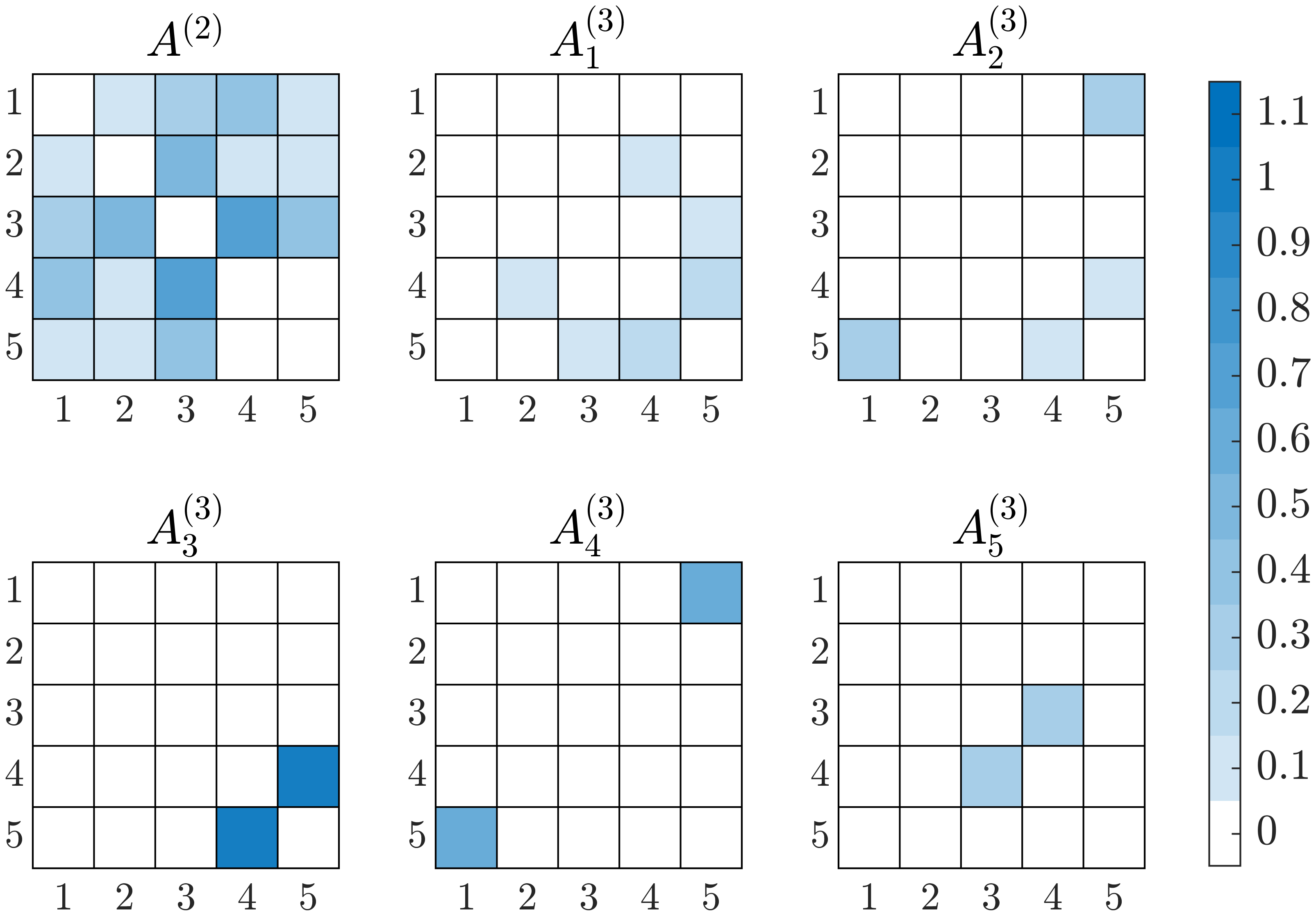}
    \caption{Hypergraph $\G^{2}$ and associated adjacency tensors $\A$ and $\Bi{i}$, $i=1,\dots,n$. The colormap represents the edge weights for both $1$- and 2-interactions.}
    \label{fig:EX1_graphs}
\end{figure}
\begin{figure}[!ht]\centering
\subfloat[]{\includegraphics[width=0.4\textwidth]{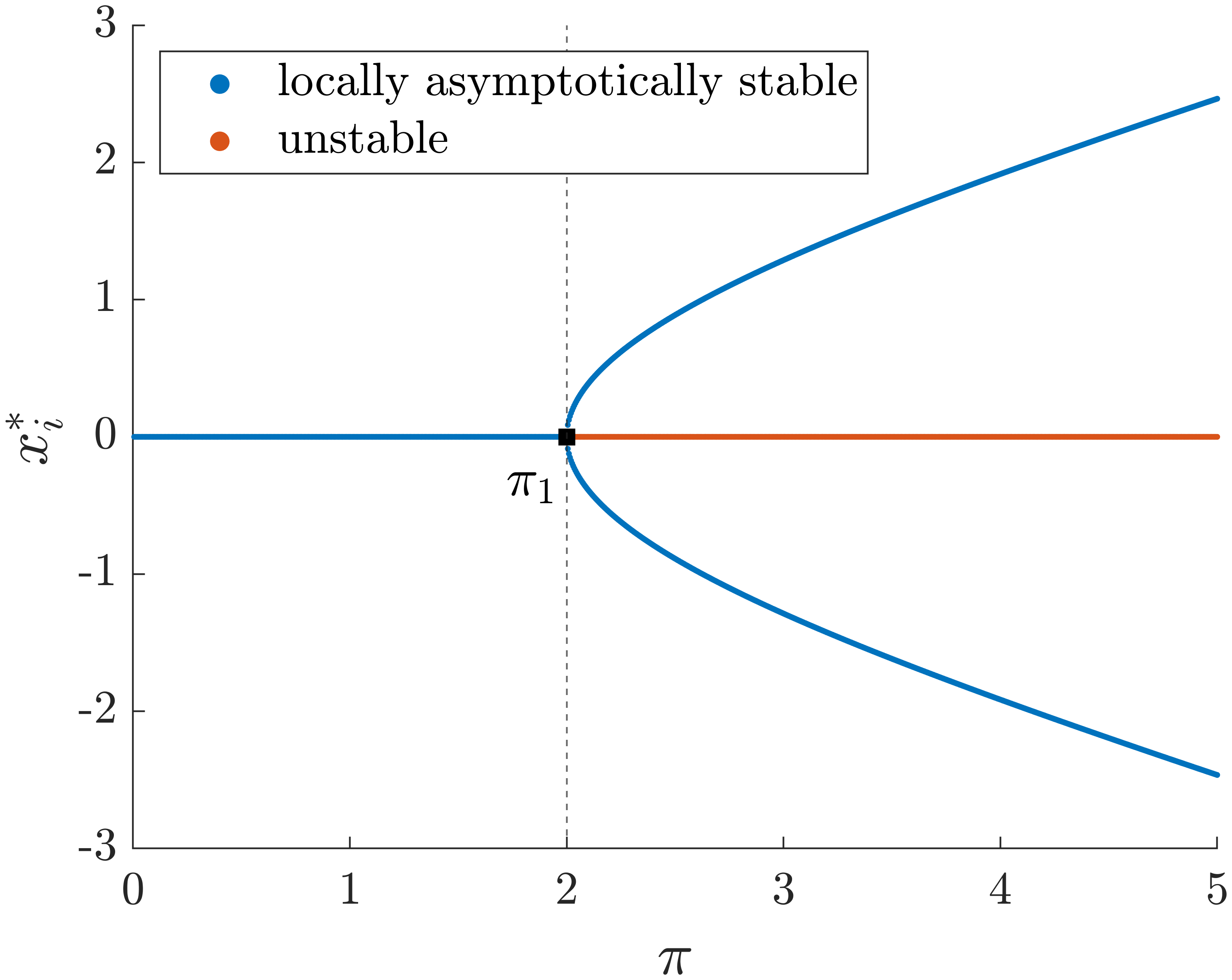}
\label{fig:EX1_bifurcation-diagram-a}}\\
\subfloat[]{\includegraphics[width=0.4\textwidth]{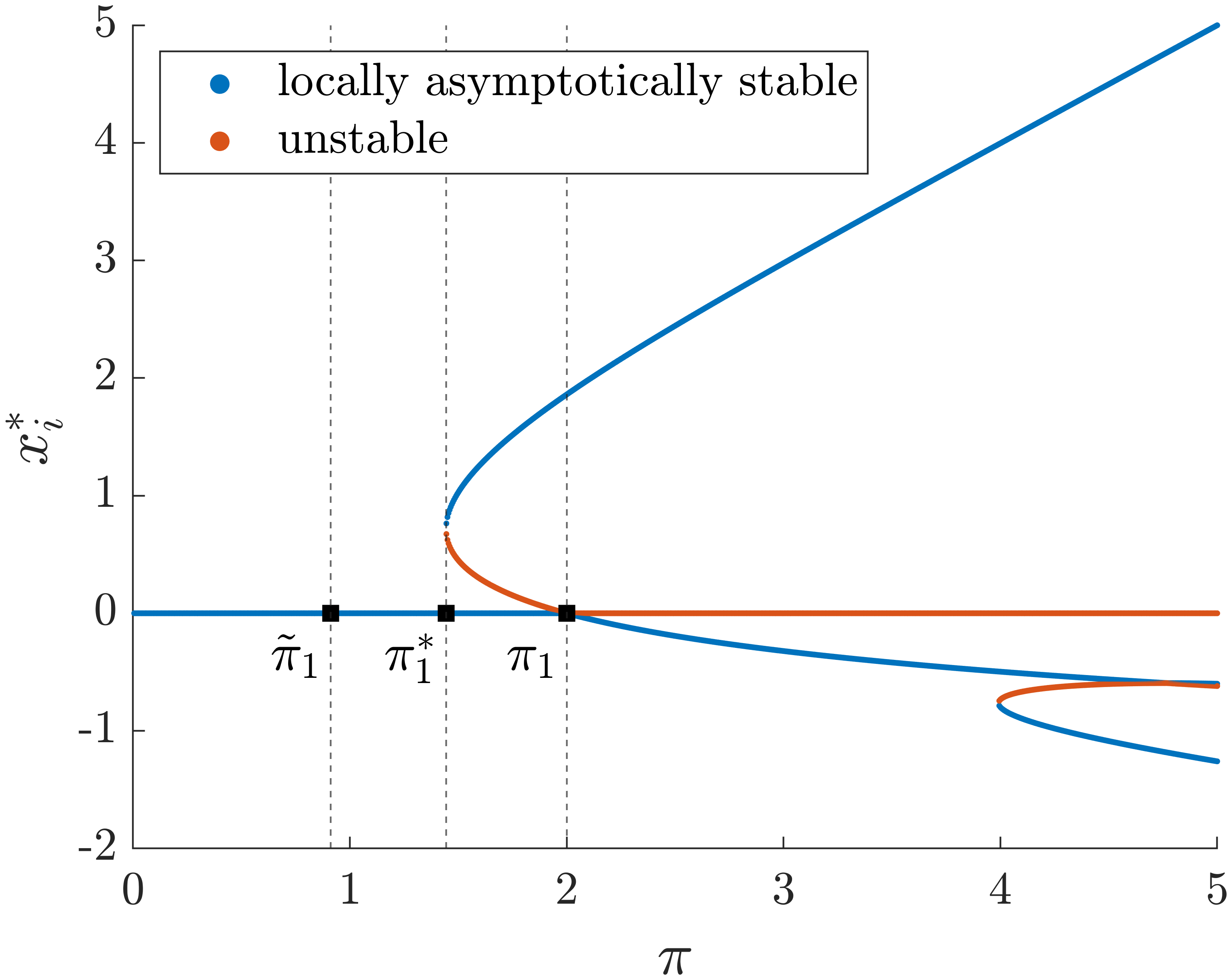}
\label{fig:EX1_bifurcation-diagram-b}}
\caption{Bifurcation diagram of system~\eqref{eqn:model}, depicted for a component $x_i$.
(a): Without 2-interactions, that is, $\Bd=0$. (b): With 2-interactions. The adjacency tensor $\Bd$ satisfies Assumption~\ref{assumption:Ai}(iii) with $\alpha=1$.}
\label{fig:EX1_bifurcation-diagram}
\end{figure}

\section{Conclusion}\label{sec:conclusion}
This study provides a qualitative analysis of a nonlinear dynamical system over cooperative hypernetworks, modeling a collective decision-making process in a multiagent system with higher-order interactions among the agents.
We show the system exhibits bistability through the unfolding of a pitchfork bifurcation, meaning that, depending on the initial conditions, the community of agents may either remain in a deadlock state or ``jump'' to a nontrivial decision. 

There are several potential directions for extending this work, the first of which is to complete the analysis in the case of $h$-interactions for $h\ge 2$. Other future directions include investigating the role of antagonistic interactions among agents and/or the impact of external influences on shaping the collective decision-making behavior.

\bibliographystyle{ieeetr}


\begin{thebibliography}{10}

\bibitem{Battiston2020networks-beyond}
F.~Battiston, G.~Cencetti, I.~Iacopini, V.~Latora, M.~Lucas, A.~Patania, J.~G. Young, and G.~Petri, ``{Networks beyond pairwise interactions: Structure and dynamics},'' {\em Physics Reports}, vol.~874, pp.~1--92, 2020.

\bibitem{Gambuzza2021stability}
L.~V. Gambuzza, F.~Di~Patti, L.~Gallo, S.~Lepri, M.~Romance, R.~Criado, M.~Frasca, V.~Latora, and S.~Boccaletti, ``{Stability of synchronization in simplicial complexes},'' {\em Nature Communications}, vol.~12, no.~1, 2021.

\bibitem{Estrada2018centralities}
E.~Estrada and G.~J. Ross, ``{Centralities in simplicial complexes. Applications to protein interaction networks},'' {\em Journal of Theoretical Biology}, vol.~438, pp.~46--60, 2018.

\bibitem{Mickalide2019higher}
H.~Mickalide and S.~Kuehn, ``{Higher-order interaction between species inhibits bacterial invasion of a phototroph-predator microbial community},'' {\em Cell Systems}, vol.~9, pp.~521--533, 12 2019.

\bibitem{Sahasrabuddhe2021nonlinear}
R.~Sahasrabuddhe, L.~Neuh{\"{a}}user, and R.~Lambiotte, ``{Modelling non-linear consensus dynamics on hypergraphs},'' {\em Journal of Physics: Complexity}, vol.~2, no.~2, 2021.

\bibitem{Sarker2023homophily}
A.~Sarker, N.~Northrup, and A.~Jadbabaie, {\em {Generalizing Homophily to Simplicial Complexes}}, vol.~1078.
\newblock Springer, 2023.

\bibitem{Hickok2022boundedconfidence}
A.~Hickok, Y.~Kureh, H.~Z. Brooks, M.~Feng, and M.~A. Porter, ``{A bounded-confidence model of opinion dynamics on hypergraphs},'' {\em SIAM Journal on Applied Dynamical Systems}, vol.~21, pp.~1--32, 3 2022.

\bibitem{Neuhauser2022Consensus}
L.~Neuh{\"{a}}user, R.~Lambiotte, and M.~T. Schaub, ``{Consensus Dynamics and Opinion Formation on Hypergraphs},'' in {\em Higher-Order Systems. Understanding Complex Systems} (F.~Battiston and G.~Petri, eds.), pp.~347--376, Springer, 2022.

\bibitem{Neuhauser2020multibody}
L.~Neuh{\"{a}}user, A.~Mellor, and R.~Lambiotte, ``{Multibody interactions and nonlinear consensus dynamics on networked systems},'' {\em Physical Review E}, vol.~101, no.~3, pp.~1--11, 2020.

\bibitem{Gallo2022synchronization}
L.~Gallo, R.~Muolo, L.~V. Gambuzza, V.~Latora, M.~Frasca, and T.~Carletti, ``{Synchronization induced by directed higher-order interactions},'' {\em Communications Physics}, vol.~5, no.~1, 2022.

\bibitem{Proskurnikov2017Tutorial}
A.~V. Proskurnikov and R.~Tempo, ``{A tutorial on modeling and analysis of dynamic social networks. Part I},'' {\em Annual Reviews in Control}, vol.~43, pp.~65--79, 2017.

\bibitem{Proskurnikov2018Tutorial}
A.~V. Proskurnikov and R.~Tempo, ``{A tutorial on modeling and analysis of dynamic social networks. Part II},'' {\em Annual Reviews in Control}, vol.~45, pp.~166--190, 2018.

\bibitem{Shi2019Dynamics}
G.~Shi, C.~Altafini, and J.~S. Baras, ``{Dynamics over signed networks},'' {\em SIAM Review}, vol.~61, pp.~229--257, 1 2019.

\bibitem{DeGroot1974Reaching}
M.~H. DeGroot, ``{Reaching a consensus},'' {\em Journal of the American Statistical Association}, vol.~69, no.~345, pp.~118--121, 1974.

\bibitem{Zhang2023higher}
Y.~Zhang, M.~Lucas, and F.~Battiston, ``{Higher-order interactions shape collective dynamics differently in hypergraphs and simplicial complexes},'' {\em Nature Communications}, vol.~14, p.~1605, 3 2023.

\bibitem{Abara2018Spectral}
P.~U. Abara, F.~Ticozzi, and C.~Altafini, ``{Spectral conditions for stability and stabilization of positive equilibria for a class of nonlinear cooperative systems},'' {\em IEEE Transactions on Automatic Control}, vol.~63, no.~2, pp.~402--417, 2018.

\bibitem{Fontan2018Multiequilibria}
A.~Fontan and C.~Altafini, ``{Multiequilibria analysis for a class of collective decision-making networked systems},'' {\em IEEE Transactions on Control of Network Systems}, vol.~5, pp.~1931--1940, 12 2018.

\bibitem{GrayAl2018}
R.~Gray, A.~Franci, V.~Srivastava, and N.~E. Leonard, ``{Multiagent Decision-Making Dynamics Inspired by Honeybees},'' {\em IEEE Transactions on Control of Network Systems}, vol.~5, pp.~793--806, 6 2018.

\bibitem{Hopfield1984}
J.~J. Hopfield, ``{Neurons with graded response have collective computational properties like those of two-state neurons.},'' {\em Proc. Natl. Acad. Sci. USA}, vol.~81, no.~10, pp.~3088--3092, 1984.

\bibitem{Fontan2021Role}
A.~Fontan and C.~Altafini, ``{The role of frustration in collective decision-making dynamical processes on multiagent signed networks},'' {\em IEEE Transactions on Automatic Control}, vol.~67, pp.~5191--5206, 10 2022.

\bibitem{Bizyaeva2021Nonlinear}
A.~Bizyaeva, A.~Franci, and N.~E. Leonard, ``{Nonlinear opinion dynamics with tunable sensitivity},'' {\em IEEE Transactions on Automatic Control}, vol.~68, pp.~1415--1430, 3 2023.

\bibitem{Bizyaeva2025multitopic}
A.~Bizyaeva, A.~Franci, and N.~E. Leonard, ``{Multi-topic belief formation through bifurcations over signed social networks},'' {\em IEEE Transactions on Automatic Control}, no.~to appear, pp.~1--16, 2025.

\bibitem{Fontan2021Signed}
A.~Fontan and C.~Altafini, ``{A signed network perspective on the government formation process in parliamentary democracies},'' {\em Scientific Reports}, vol.~11, 12 2021.

\bibitem{zhang2025online}
L.~Zhang and S.~Zhang, ``{Online learning for nonlinear dynamical systems without the i.i.d. condition},'' {\em arXiv:2504.02995}, 2025.

\bibitem{zhang2020modeling}
S.~Zhang, A.~Ringh, X.~Hu, and J.~Karlsson, ``Modeling collective behaviors: A moment-based approach,'' {\em IEEE Transactions on Automatic Control}, vol.~66, no.~1, pp.~33--48, 2020.

\bibitem{He2020GlobalImpulses}
Z.~He, C.~Li, H.~Li, and Q.~Zhang, ``{Global exponential stability of high-order Hopfield neural networks with state-dependent impulses},'' {\em Physica A: Statistical Mechanics and its Applications}, vol.~542, p.~123434, 2020.

\bibitem{Courtney2016generalized-nets}
O.~T. Courtney and G.~Bianconi, ``{Generalized network structures: The configuration model and the canonical ensemble of simplicial complexes},'' {\em Physical Review E}, vol.~93, p.~062311, 6 2016.

\bibitem{Tudisco2021centrality}
F.~Tudisco and D.~J. Higham, ``{Node and edge nonlinear eigenvector centrality for hypergraphs},'' {\em Communications Physics}, vol.~4, p.~201, 9 2021.

\bibitem{HornJohnson2013}
R.~A. Horn and C.~R. Johnson, {\em {Matrix analysis}}.
\newblock Cambridge University Press, second~ed., 2013.

\bibitem{Blanchini1995nonquadratic}
F.~Blanchini, ``{Nonquadratic Lyapunov functions for robust control},'' {\em Automatica}, vol.~31, pp.~451--461, 3 1995.

\bibitem{GolubitskySchaeffer1985}
M.~Golubitsky and D.~G. Schaeffer, {\em {Singularities and Groups in Bifurcation Theory. Volume I}}, vol.~51 of {\em Applied Mathematical Sciences}.
\newblock Springer New York, 1985.

\bibitem{Fontan2023Social}
A.~Fontan, M.~Farjadnia, J.~Llewellyn, C.~Katzeff, M.~Molinari, V.~Cvetkovic, and K.~Johansson, ``{Social interactions for a sustainable lifestyle: The design of an experimental case study},'' {\em IFAC-PapersOnLine}, vol.~56, no.~2, pp.~657--663, 2023.

\bibitem{Carletti2020random}
T.~Carletti, F.~Battiston, G.~Cencetti, and D.~Fanelli, ``{Random walks on hypergraphs},'' {\em Physical Review E}, vol.~101, p.~022308, 2 2020.

\end{thebibliography}

\end{document}